\numberwithin{equation}{section}
\numberwithin{figure}{section}
\theoremstyle{plain}
 \newtheorem{thm}{Theorem}[section]
 \newtheorem{lem}[thm]{Lemma}
 \newtheorem{prp}[thm]{Proposition}
 \newtheorem{cor}[thm]{Corollary}
\theoremstyle{definition}
\theoremstyle{remark}
 \newtheorem{rem}[thm]{Remark}
 \newtheorem{exm}[thm]{Example}
 \newtheorem{prb}[thm]{Problem}
\newcommand{\byeqn}[1]{\overset{\eqref{#1}}{=}}
\newcommand{\id}[1]{\mathrm{id}_{#1}}
\newcommand{\ldv}{\backslash}
\newcommand{\rdv}{/}
\newcommand{\inv}{^{-1}}
\newcommand{\linv}{^{\ell}}
\newcommand{\rinv}{^r}
\newcommand{\lsec}[1]{L_{(#1)}}
\newcommand{\rsec}[1]{R_{(#1)}}
\newcommand{\mlt}[1]{\mathrm{Mlt}(#1)}
\newcommand{\lmlt}[1]{\mathrm{Mlt}_{\ell}(#1)}
\newcommand{\rmlt}[1]{\mathrm{Mlt}_r(#1)}
\newcommand{\inn}[1]{\mathrm{Inn}(#1)}
\newcommand{\linn}[1]{\mathrm{Inn}_{\ell}(#1)}
\newcommand{\rinn}[1]{\mathrm{Inn}_r(#1)}
\newcommand{\nuc}[1]{\mathrm{Nuc}(#1)}
\newcommand{\lnuc}[1]{\mathrm{Nuc}_{\ell}(#1)}
\newcommand{\mnuc}[1]{\mathrm{Nuc}_m(#1)}
\newcommand{\rnuc}[1]{\mathrm{Nuc}_r(#1)}
\newcommand{\lmnuc}[1]{\mathrm{Nuc}_{\ell,m}(#1)}
\newcommand{\lrnuc}[1]{\mathrm{Nuc}_{\ell,r}(#1)}
\newcommand{\rmnuc}[1]{\mathrm{Nuc}_{r,m}(#1)}
\newcommand{\atp}[1]{\mathrm{Atp}(#1)}
\title{Loops with squares in two nuclei}
\author{Michael Kinyon}
\address[Kinyon]{Department of Mathematics, University of Denver, Denver, CO 80208 USA}
\email{michael.kinyon@du.edu}
\author{J. D. Phillips}
\address[Phillips]{Department of Mathematics and Computer Science, Northern Michigan University, Marquette, MI 49855 USA}
\email{jophilli@nmu.edu}
\date{\today}
\begin{document}

\begin{abstract}
Although little can be gleaned about a loop with the property that its squares are, say, left nuclear ($xx\cdot yz = (xx\cdot y)z$), if its squares are also, say, middle nuclear ($(x\cdot yy)z = x(yy\cdot z)$), then the loop exhibits more structure than one might initially guess. Loops with squares in (at least) two nuclei include many well known classes of loops, such as C loops and extra loops, and not so well known classes such as left C loops. In any loop with, say, left and middle nuclear squares, the intersection of the left and middle nuclei is a normal subloop; hence such a loop is simple if and only if it is a group or a simple unipotent loop. Loops in which squaring is a centralizing endomorphism have even more structure; they are power-associative, and a torsion loop in that class is a direct product of a loop of $2$-elements and a loop of elements of odd order.
\end{abstract}

\maketitle

\section{Introduction}
\label{Sec:intro}

A \emph{loop} $(Q,\cdot,e)$ is a set $Q$ with a binary operation $\cdot$ such that $e\cdot x = x = x\cdot e$ for all $x\in Q$, and for each $a\in Q$, the mappings $L_a\colon Q\to Q; x\mapsto ax$ and $R_a\colon Q\to Q; x\mapsto xa$ are bijections. Basic references for loop theory are \cite{Bel}, \cite{Br}, \cite{Pf}. We adopt a standard notational convention for nonassociative
structures to avoid excessive parentheses: juxtaposition has priority over the displayed binary operation $\cdot$ in terms to be multiplied. For example, the identity $x(y\cdot yz) = (x\cdot yy)z$ is shorthand for $x \cdot (y \cdot (y \cdot z)) = (x \cdot (y \cdot y))\cdot z$.

A universally quantified identity in $3$ distinct variables is said to be of \emph{Bol-Moufang type} if (i) all three variables occur on both sides of the equal sign in the same order, and (ii) exactly one of the variables appears twice on both sides. For example, the identity above is of Bol-Moufang type. Identities of Bol-Moufang type were first studied by Fenyves \cite{Fe} who sorted out the loop varieties (in the universal algebra sense) they define; this work was later refined and completed in \cite{PVII}.

There are 60 identities of Bol-Moufang type, and it turns out they define 14 distinct varieties of loops. Six of those varieties have been investigated quite thoroughly---groups, extra loops, Moufang loops, left Bol loops, right Bol loops, and C loops.

Another 6 of the 14 Bol-Moufang varieties of loops have so little structure that not much can be said about them individually: flexible loops, left alternative loops, right alternative loops and the following:
\begin{align}
	xx\cdot yz = (xx\cdot y)z & & \text{\emph{left nuclear squares}} \tag{LNS}\label{Eqn:LNS} \\
	(x\cdot yy)z = x(yy\cdot z) & & \text{\emph{middle nuclear squares}} \tag{MNS}\label{Eqn:MNS} \\
	xy\cdot zz = x(y\cdot zz) & & \text{\emph{right nuclear squares}}. \tag{RNS}\label{Eqn:RNS}
\end{align}

Later we will also have occasion to discuss the property $xx\cdot y = y\cdot xx$. We will refer to this as \emph{commuting squares}.

The remaining two Bol-Moufang loop varieties have interesting structure but, to our knowledge, have not been studied in much detail. \emph{Left C loops} are defined by any one of four equivalent Bol-Moufang identities \cite{Fe,PVII}: 
\begin{align*}
x(y \cdot yz) = (x \cdot yy)z, & & xx \cdot yz = (x \cdot xy)z, \\
x(x \cdot yz) = (x \cdot xy)z, & & x(x \cdot yz) = (xx \cdot y)z.
\end{align*}
\emph{Right C loops} are defined using the mirrors of these four identities, that is, a loop is right C if and only if its opposite loop is left C. Thus an investigation of left C loops suffices to understand right C loops. C loops themselves are defined by a Bol-Moufang identity of their own, namely $(xy\cdot y)z = x(y\cdot yz)$, but can be characterized as loops that are both left C and right C.

\begin{rem}
Fenyves \cite{Fe} originally dubbed left C loops as LC loops. Fenyves' intention with his use of the letter C is debatable; it has been suggested that it was short for ``central,'' although he certainly never wrote that explicitly. However, there can be little doubt that the L in LC stands for ``left''. We prefer ``left C'' over ``LC'' for aesthetic reasons but also for a practical one: LC is too easy to confuse with LCC, which is the standard acronym for the variety of \emph{left conjugacy closed} loops, a more highly structured and well studied variety.
\end{rem}

Our original motivation for this paper was a detailed study of left C loops. They have an important property \cite{Fe}. 

\begin{prp}
	Every left C loop has both left nuclear and middle nuclear squares. 
\end{prp}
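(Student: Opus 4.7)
The plan is to first extract an auxiliary \emph{left alternative} identity $x\cdot xy = xx\cdot y$ from the four equivalent defining identities for left C loops, and then to derive LNS and MNS each as an immediate consequence.

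For the first step, I would compare two of the four listed identities to get the left alternative law. Setting the right-hand sides of the third and fourth identity equal gives $(x\cdot xy)z = (xx\cdot y)z$ for all $x,y,z$; since $R_z$ is a bijection, cancellation on the right yields $x\cdot xy = xx\cdot y$. (Alternatively, equating the right-hand sides of the second and third identities gives $xx\cdot yz = x(x\cdot yz)$, and specializing $z=e$ produces the same conclusion.) Either derivation is routine and I would simply pick the cleaner one.

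For LNS, I would start from the fourth defining identity $x(x\cdot yz) = (xx\cdot y)z$ and apply left alternativity to the left-hand side (with $u = yz$) to rewrite $x(x\cdot yz)$ as $xx\cdot yz$. Combining these gives $xx\cdot yz = (xx\cdot y)z$, which is precisely LNS. For MNS, I would start from the first defining identity $x(y\cdot yz) = (x\cdot yy)z$ and apply left alternativity inside the inner factor on the left, rewriting $y\cdot yz$ as $yy\cdot z$. This yields $(x\cdot yy)z = x(yy\cdot z)$, which is MNS.

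There is essentially no obstacle beyond choosing which two of the four equivalent left C identities to compare first; once left alternativity is in hand, both nuclear-squares identities drop out by a single substitution. The proof is therefore very short, and the only judgment call is presentational—whether to isolate the left alternative step as a standalone lemma or to fold it into the main argument.
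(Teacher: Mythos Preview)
Your argument is correct. One small point of care: deriving left alternativity by equating identities (3) and (4) presupposes that a left C loop satisfies \emph{all four} of the listed identities simultaneously, not just one of them. The paper states this equivalence as a known fact (citing \cite{Fe,PVII}), so you are entitled to use it; but it is worth flagging that you are invoking that equivalence rather than proving it.

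As for comparison with the paper: the paper does not give its own proof of this proposition. It is stated in the introduction with a citation to Fenyves \cite{Fe} and no argument. So there is nothing to compare against; your short derivation is a perfectly reasonable self-contained justification, and indeed the paper's later Theorem in \S\ref{Sec:leftC} lists ``left nuclear squares and the left alternative property'' and ``middle nuclear squares and the left alternative property'' as equivalent characterizations of left C loops, which is exactly the decomposition your proof exploits.
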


\noindent Dually, right C loops have both middle nuclear and right nuclear squares. C loops, and hence extra loops, have squares in all three nuclei.

In the course of our investigations, we found that the structure of left C loops is largely determined by the property of the proposition, and so we shifted from our original task to the more general study of the pairwise intersections of the left nuclear square, middle nuclear square, and right nuclear square varieties.  

After a review of basic loop theory in {\S}\ref{Sec:basics}, we discuss principal loop isostrophes in {\S}\ref{Sec:isostrophes} which will be our main tool for transferring results between the aforementioned intersection varieties. In {\S}\ref{Sec:two_nuc}, we turn to our main results. In Theorem \ref{Thm:LMNuc_normal}, we show that if $Q$ is a loop with left and middle nuclear squares, then the intersection of the left and middle nuclei is a normal subloop. This was already known for left C loops \cite{DK}. Using principal isostrophes, we then prove the corresponding result for loops with left and right nuclear squares (Theorem \ref{Thm:LRNuc_normal}).

In {\S}\ref{Sec:AIP}, we study the subvariety of loops in which squaring is a centralizing endomorphism, that is, an endomorphism taking its values in the loop center. Our main result of the section, Theorem \ref{Thm:AIP_SqEndo}, characterizes such loops as those with squares in two nuclei and endomorphic squaring or, equivalently, with squares in two nuclei and the automorphic inverse property. These loops also turn out to be power-associative (Lemma \ref{Lem:CS_PA}).

The main result of {\S}\ref{Sec:decomp} is Theorem \ref{Thm:ExO}, a decomposition theorem showing that a torsion loop in the variety of loops with centralizing endomorphic squaring is a direct product of a loop of $2$-elements and a loop in which every element has odd order. This is the analog of similar decomposition results for certain commutative diassociative loops \cite{commdi}, commutative automorphic loops \cite{commaut}, and Bruck loops \cite{AKP,Bau}.

We wrap up the paper with {\S}\ref{Sec:leftC}, a discussion of the implications of our results for left C loops.

\section{Basics}
\label{Sec:basics}

In this section we review some of the basics of loop theory we will need in what follows. For uncited assertions, we refer the reader to the standard references \cite{Bel}, \cite{Br}, \cite{Pf}.

For elements $a,b$ of a loop $Q$, let $a\ldv b$ and $b\rdv a$ denote, respectively, the unique solutions $x$ and $y$ to the equations $ax=b$
and $ya=b$. This introduces the \emph{left} and \emph{right division} operations $\ldv$ and $\rdv$, which are easily seen to satisfy the
identities
\[
x\cdot x\ldv y = y = y\rdv x\cdot x\qquad\text{and}\qquad x\ldv xy = y = yx\rdv x\,.
\]
Here we use the standard notation convention that juxtaposition of the multiplication binds more tightly than the divisions, and the divisions,
in turn, bind more tightly than the explicit multiplication operation.

For all $x\in Q$, we abbreviate the \emph{left inverse} $e\rdv x$ and the \emph{right inverse} $x\ldv e$ by $x\linv$ and $x\rinv$, respectively. Thus $x\linv x = e$ and $xx\rinv = e$. We denote the corresponding permutations by $\lambda\colon Q\to Q; x\mapsto x\linv$ and
$\rho\colon Q\to Q; x\mapsto x\rinv$. For those $x$ satisfying $x\linv = x\rinv$, we denote the common value by $x\inv$, that is,
$x^{-1}$ is the (unique) two-sided inverse.

Note that in this paper, permutations act on the left of their arguments. The loop theory literature is not consistent with respect to this convention, and the present authors include themselves in that regard.

We have already noted that for each element $a$ of a loop $Q$, the \emph{left} and \emph{right translation} maps $L_a\colon Q\to Q;x \mapsto ax$ and $R_a\colon Q\to Q;x\mapsto xa$ are permutations (bijections) of $Q$. For a subloop $S$ of $Q$, let $\lsec{S}\coloneqq \{ L_x \mid x\in S\}$ and
$\rsec{S}\coloneqq \{ R_x\mid x\in S\}$ denote the \emph{left} and \emph{right sections} of $S$.

For a subloop $S$ of a loop $Q$, the \emph{left} and \emph{right relative multiplication groups} and the \emph{relative multiplication group} are permutation groups generated by the sections:
\[
\lmlt{Q;S}\coloneqq \langle \lsec{S}\rangle\,,\quad \rmlt{Q;S}\coloneqq \langle \rsec{S}\rangle\,,
\quad \mlt{Q;S}\coloneqq \langle \lsec{S},\rsec{S} \rangle\,.
\]
In case $S=Q$, these are just called the \emph{left} and \emph{right multiplication groups} and the \emph{multiplication group} of $Q$,
respectively, and denoted more simply by $\lmlt{Q}$, $\rmlt{Q}$ and $\mlt{Q}$. The \emph{left} and \emph{right inner mapping group} $\linn{Q}$ and $\rinn{Q}$ and the \emph{inner mapping group} $\inn{Q}$ are the stabilizers of $e$ in the corresponding multiplication groups.

A subloop of a loop $Q$ is \emph{normal} if it is a block of $\mlt{Q}$. In particular, if $H$ is a normal subgroup of $\mlt{Q}$, then
the orbit of $e$ under $H$ is a normal subloop. Two other useful characterizations of normality are the following: a subloop $S$ of loop
$Q$ is normal if and only if $S$ is invariant under the action of $\inn{Q}$ if and only if $S$ is a block of a congruence of $Q$.

In a loop $Q$, the \emph{left nucleus}, \emph{middle nucleus} and \emph{right nucleus} are defined, respectively, by
\begin{align*}
  \lnuc{Q} &\coloneqq \{ a\in Q\mid ax\cdot y = a\cdot xy\,,\ \forall x,y\in Q\}\,, \\
  \mnuc{Q} &\coloneqq \{ a\in Q\mid xa\cdot y = x\cdot ay\,,\ \forall x,y\in Q\}\,, \\
  \rnuc{Q} &\coloneqq \{ a\in Q\mid xy\cdot a = x\cdot ya\,,\ \forall x,y\in Q\}\,.
\end{align*}
These have various useful characterizations which are immediate from the definitions:
\begin{align*}
\lnuc{Q} &= \{a\in Q\mid L_a L_x = L_{ax}\,,\ \forall x\in Q\} = \{a\in Q\mid L_a R_y = R_y L_a\,,\ \forall y\in Q\}\,,\\
\mnuc{Q} &= \{a\in Q\mid L_x L_a = L_{xa}\,,\ \forall x\in Q\} = \{a\in Q\mid R_y R_a = R_{ay}\,,\ \forall y\in Q\}\,,\\
\rnuc{Q} &= \{a\in Q\mid R_a R_y = R_{ya}\,,\ \forall y\in Q\} = \{a\in Q\mid L_x R_a = R_a L_x\,,\ \forall x\in Q\}\,.
\end{align*}
We will need their pairwise intersections, so we introduce the notation
\begin{align*}
\lmnuc{Q} &\coloneqq \lnuc{Q}\,\cap\, \mnuc{Q}\,,\\
\lrnuc{Q} &\coloneqq \lnuc{Q}\,\cap\, \rnuc{Q}\,,\\
\rmnuc{Q} &\coloneqq \rnuc{Q}\,\cap\, \mnuc{Q}\,.
\end{align*}
Finally, the nucleus, $\nuc{Q}$ is defined to be the intersection of all three nuclei. All of the sets defined above are subloops
of any loop $Q$; however, none of them need be normal subloops.

The \emph{commutant} (also known as the \emph{centrum}, \emph{semicenter}, \emph{commutative center} and other names) of a loop $Q$ is the subset
\[
C(Q)\coloneqq \{a\in Q\mid ax=xa\,,\ \forall x\in Q\} = \{a\in Q\mid L_a = R_a\}\,.
\]
In general, $C(Q)$ is not a subloop of $Q$, even in structured varieties like Bol loops \cite{KPV} or C loops (\cite{SAS}, Ex.~4.1).

Finally, the \emph{center} of $Q$ is $Z(Q) = C(Q)\cap \nuc{Q}$. This is a normal subloop and, in fact, is precisely the fixed point set of $\inn{Q}$. The center can be characterized as the intersection of the commutant with any pair of nuclei:
\[
Z(Q) = C(Q)\cap \mathrm{Nuc}_{i,j}{Q},
\]
where $i,j\in \{\ell,m,r\}$, $i\neq j$.

Note that in case $S$ is a subloop of the left or middle nucleus of a loop $Q$, then $\lmlt{Q;S} = \lsec{S}$, while if $S$ is a subloop
of the middle or right nucleus, then $\rmlt{Q;S} = \rsec{S}$. These will be the only relative one-sided multiplication groups encountered in this paper.

A triple $(\alpha,\beta,\gamma)$ of bijections $\alpha,\beta,\gamma\colon Q\to Q$ of a loop $Q$ is said to be an \emph{autotopism}
if, for all $x,y\in Q$, $\alpha x\cdot \beta y = \gamma (x\cdot y)$. In the special case $\alpha=\beta=\gamma$, the autotopism
is identified with the underlying \emph{automorphism} $\alpha$. The set $\atp{Q}$ of all autotopisms of $Q$ forms a group under
composition of triples of mappings.

Autotopisms of $Q$ in which one of the three permutations is the identity mapping $\id{Q}$ can be completely described in
terms of the nuclei \cite{Br}:

\begin{prp}\label{Prp:id_nuc}
Let $Q$ be a loop and let $\alpha,\beta,\gamma\colon Q\to Q$ be bijections.
\begin{enumerate}
  \item If $(\alpha,\id{Q},\gamma)\in \atp{Q}$, then $\alpha = \gamma = L_a$ where $a = \alpha e\in \lnuc{Q}$.
  \item If $(\alpha,\beta,\id{Q})\in \atp{Q}$, then $\alpha = R_a\inv$ and $\beta = L_{a\rinv}\inv$ where $a = \beta e\in \mnuc{Q}$.
  \item If $(\id{Q},\beta,\gamma)\in \atp{Q}$, then $\beta = \gamma = R_a$ where $a = \gamma e\in \rnuc{Q}$.
\end{enumerate}
In particular,
\begin{align*}
\lnuc{Q} &= \{a\in Q\mid (L_a,\id{Q},L_a)\in \atp{Q}\}\,,\\
\mnuc{Q} &= \{a\in Q\mid (R_a\inv,L_{a\rinv}\inv,\id{Q})\in \atp{Q}\}\,,\\
\rnuc{Q} &= \{a\in Q\mid (\id{Q},R_a,R_a)\in \atp{Q}\}\,.
\end{align*}
\end{prp}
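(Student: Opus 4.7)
The plan is to handle the three cases separately, each time pinning down the unnamed components by substituting $e$ in one slot at a time and then reading the nuclear condition off directly. The converses---required to establish the ``In particular'' characterizations at the end---follow immediately from the translation descriptions of the nuclei recalled in \S\ref{Sec:basics}.

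For (1), from $\alpha x \cdot y = \gamma(xy)$, setting $x = e$ yields $(\alpha e)\, y = \gamma y$, so $\gamma = L_a$ with $a := \alpha e$. Setting $y = e$ then gives $\alpha x = \gamma x = ax$, so $\alpha = L_a$ as well, and the autotopism condition collapses to $ax \cdot y = a \cdot xy$, which is precisely the defining identity for $a \in \lnuc{Q}$. Case (3) is entirely parallel: from $x \cdot \beta y = \gamma(xy)$, setting $x = e$ yields $\beta = \gamma$, and setting $y = e$ yields $\beta = \gamma = R_a$ with $a := \gamma e$, reducing the condition to $x \cdot ya = xy \cdot a$.

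For (2), from $\alpha x \cdot \beta y = xy$, setting $y = e$ and writing $a := \beta e$ gives $\alpha x \cdot a = x$, so $\alpha = R_a\inv$. The key move is then to substitute $x \mapsto xa$: since $\alpha(xa) = x$, this produces $x \cdot \beta y = (xa) y$ for all $x,y$. Setting $x = e$ now yields $\beta y = ay$, i.e.\ $\beta = L_a$, and feeding this back gives $(xa)\, y = x(ay)$, so $a \in \mnuc{Q}$. To reconcile with the stated form of $\beta$, apply the middle nucleus identity $L_x L_a = L_{xa}$ with $x = a\linv$: this yields $L_{a\linv} L_a = L_{a\linv a} = L_e = \id{Q}$, so $L_a\inv = L_{a\linv}$; evaluating $L_a L_{a\linv} = \id{Q}$ at $e$ shows $a \cdot a\linv = e$, whence $a\linv = a\rinv$. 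Therefore $\beta = L_a = L_{a\rinv}\inv$, matching the statement.

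The only mildly non-routine step is the substitution $x \mapsto xa$ in case (2), which simultaneously determines $\beta$ and exposes the middle nuclear identity; the minor subtlety is recognizing that one must pass through $a\linv$ (not $a\rinv$) to invert $L_a$, and then observe that a middle nuclear element has a two-sided inverse. Everything else is a mechanical substitution, and the explicit nuclear characterizations at the end of the proposition then follow by combining each case with its (immediate) converse.
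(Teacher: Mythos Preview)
Your argument is correct in all three cases; in particular, the handling of (2)---first getting $\alpha = R_a^{-1}$, then substituting $x\mapsto xa$ to see $\beta = L_a$ and $a\in\mnuc{Q}$, and finally using $L_x L_a = L_{xa}$ with $x=a^{\ell}$ to recover $\beta = L_{a^{r}}^{-1}$---is clean and complete.

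There is nothing to compare against: the paper states this proposition as a standard fact (attributed to \cite{Br}) and gives no proof of its own. Your write-up would serve perfectly well as the omitted verification.
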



A loop $Q$ is \emph{left alternative} if, for all $x,y\in Q$,
\[
x\cdot xy = x^2\cdot y\qquad\text{or equivalently,}\qquad L_x^2 = L_{x^2}\,. \tag{\textsc{LAlt}}\label{Eqn:LAlt}
\]
Dually, a loop $Q$ is \emph{right alternative} if, for all $x,y\in Q$,
\[
xy\cdot y = xy^2\qquad\text{or equivalently,}\qquad R_y^2 = R_{y^2}\,. \tag{\textsc{RAlt}}\label{Eqn:RAlt}
\]
A loop which both left and right alternative is simply called \emph{alternative}.

A loop $Q$ is said to have the \emph{left inverse property} (LIP) if, for all $x,y\in Q$,
\[
x\linv \cdot xy = y\qquad\text{or equivalently,}\qquad L_x\inv = L_{x\linv}\,. \tag{\textsc{LIP}}\label{Eqn:LIP}
\]
Dually, a loop $Q$ is said to have the \emph{right inverse property} (RIP) if, for all $x,y\in Q$,
\[
xy\cdot y\rinv = x\qquad\text{or equivalently,}\qquad R_y\inv = R_{y\rinv}\,. \tag{\textsc{RIP}}\label{Eqn:RIP}
\]
A loop $Q$ is said to have the \emph{antiautomorphic inverse property} (AAIP) if, for all $x,y\in Q$,
\[
(xy)\linv = y\linv x\linv\qquad\text{or equivalently,}\qquad (xy)\rinv = x\rinv y\rinv\,. \tag{\textsc{AAIP}}\label{Eqn:AAIP}
\]
A loop $Q$ has the \emph{automorphic inverse property} (AIP) if, for all $x,y\in Q$,
\[
(xy)\rinv = x\rinv y\rinv\qquad\text{or equivalently,}\qquad (xy)\linv = x\linv y\linv\,. \tag{\textsc{AIP}}\label{Eqn:AIP}
\]

If $Q$ has the LIP, then for all $x\in Q$, $x\linv = x\linv\cdot xx\rinv = x\rinv$. A dual argument applies if $Q$ has the RIP.
If $Q$ has AAIP, then for all $x\in Q$, $xx\linv = (x\rinv)\linv x\linv = (xx\rinv)\linv = e$, and so $x\linv = x\rinv$.
Thus a loop satisfying any of LIP, RIP or AAIP has two-sided inverses.

A loop $Q$ satisfying any two of LIP, RIP and AAIP is easily seen to satisfy the third property, and in that case, $Q$ is said to have the \emph{inverse property} (IP).

\begin{exm}
Unlike the other aforementioned properties involving inverses, the AIP does not imply that a loop has two-sided inverses. Here is a Cayley table of an AIP loop in which $1^l = 2 \ne 3 = 1^r$.
\[
\begin{array}{ccccc}
0&1&2&3&4\\
1&4&3&0&2\\
2&0&4&1&3\\
3&2&0&4&1\\
4&3&1&2&0
\end{array}
\]
\end{exm}

The first three parts of the following are well known; the fourth part, although known, is less familiar.

\begin{lem}\label{Lem:lmrnuc}
Let $Q$ be a loop.
\begin{enumerate}
  \item If $Q$ has the LIP, then $\lnuc{Q} = \mnuc{Q}$.
  \item If $Q$ has the RIP, then $\rnuc{Q} = \mnuc{Q}$.
  \item If $Q$ has the AAIP, then $\lnuc{Q} = \rnuc{Q}$.
  \item If $Q$ has the AIP, then $\mnuc{Q}\subseteq C(Q)$ and $\lmnuc{Q} = \lrnuc{Q} = \rmnuc{Q} = Z(Q)$.
\end{enumerate}
\end{lem}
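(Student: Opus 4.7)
Parts (1)--(3) share a common tactic: exploit the given inverse-type property to pass one nucleus identity into another via the inversion map. The key background fact, valid in any loop, is that each of $\lnuc{Q}$, $\mnuc{Q}$, $\rnuc{Q}$ is closed under inversion (for instance, $a \in \lnuc{Q}$ means $L_a$ commutes with every $R_y$, hence so does $L_a^{-1}$; and nucleus membership forces $a$ to have a two-sided inverse, so $L_a^{-1} = L_{a\inv}$).

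For part (1), my plan is to prove $\lnuc{Q} \subseteq \mnuc{Q}$; the reverse inclusion is by a symmetric argument. Given $a \in \lnuc{Q}$, also $a\inv \in \lnuc{Q}$. In an LIP loop $L_x^{-1} = L_{x\inv}$, so inverting the identity $L_a L_x = L_{ax}$ gives $L_{x\inv} L_{a\inv} = L_{(ax)\inv}$, i.e.\ $x\inv(a\inv y) = (ax)\inv y$. Substituting $a\inv$ for $a$ (valid since $a\inv \in \lnuc{Q}$) and then $x\inv$ for $x$ yields $x(ay) = (a\inv x\inv)\inv y$. The short calculation $(a\inv x\inv)(xa) = a\inv(x\inv(xa)) = a\inv a = e$, using $a\inv \in \lnuc{Q}$ and LIP, identifies $(a\inv x\inv)\inv$ with $xa$, so $x(ay) = (xa)y$ and $a \in \mnuc{Q}$. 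Part (2) is the mirror image.

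For part (3), applying AAIP to the identity $a(uv) = (au)v$ characterizing $a \in \lnuc{Q}$ and inverting yields $(v\inv u\inv)a\inv = v\inv(u\inv a\inv)$, which is exactly $a\inv \in \rnuc{Q}$; closure of $\rnuc{Q}$ under inversion then gives $a \in \rnuc{Q}$, and the symmetric argument completes $\lnuc{Q} = \rnuc{Q}$.

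Part (4) is the substantive content. For $\mnuc{Q} \subseteq C(Q)$: given $a \in \mnuc{Q}$, $a$ has a two-sided inverse and $a(a\inv z) = z$. Evaluating $(xa)y = x(ay)$ at $y = a\inv x\rinv$ (where $x\rinv$ is the right inverse of $x$) gives $(xa)(a\inv x\rinv) = x \cdot x\rinv = e$. Meanwhile AIP provides $(ax)\rinv = a\rinv x\rinv = a\inv x\rinv$, so $(ax)(a\inv x\rinv) = e$ too; right-cancelling then forces $xa = ax$. The inclusions $\lmnuc{Q}, \rmnuc{Q} \subseteq \mnuc{Q} \subseteq C(Q)$ together with the paper's characterization $Z(Q) = C(Q) \cap \lmnuc{Q} = C(Q) \cap \rmnuc{Q}$ give $\lmnuc{Q} = \rmnuc{Q} = Z(Q)$. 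For $\lrnuc{Q} = Z(Q)$ it suffices to show $\lrnuc{Q} \subseteq C(Q)$: for $a \in \lnuc{Q} \cap \rnuc{Q}$, $a \in \lnuc{Q}$ gives $(ax)x\rinv = a(x \cdot x\rinv) = a$, while AIP together with $a\inv \in \rnuc{Q}$ give $((xa)x\rinv)a\inv = (xa)(x\rinv a\inv) = (xa)(xa)\rinv = e$, and the identity $(wa\inv)a = w$ from $a \in \rnuc{Q}$ then forces $(xa)x\rinv = a$. Right-cancelling in $(ax)x\rinv = (xa)x\rinv$ yields $ax = xa$.

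The most delicate step is part (4): AIP does not guarantee two-sided inverses in general (per the example in the text), so the arguments must carefully use $x\rinv$ rather than $x\inv$ and rely on the uniqueness of one-sided inverses in any loop.
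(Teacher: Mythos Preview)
Your proof is correct and follows essentially the same approach as the paper. For (1)--(3) the paper is terse---it defers (1) and (2) to the later theorem on principal isostrophes and handles (3) by noting that AAIP makes $\lambda$ an isomorphism onto the opposite loop---but your explicit element-level arguments unpack exactly the same ideas; for (4) your computation matches the paper's almost line for line, with only the cosmetic difference that the paper works with $x\linv$ where you work with $x\rinv$.
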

\begin{proof}
(1) and (2) are easy to check directly, but we also note that they are corollaries of the more general Theorem \ref{Thm:iso_nucs}
below (and the theorem's dual).

(3) The opposite loop $(Q,\bullet)$ defined by $x \bullet y = yx$ satisfies $\lnuc{Q,\bullet} = \rnuc{Q,\cdot}$.
The AAIP just says that $\lambda\colon Q\to Q;x\mapsto x\linv$ is an isomorphism of $(Q,\cdot)$ onto $(Q,\bullet)$,
hence preserves all nuclei. But the nuclei of $(Q,\cdot)$ are invariant under $\lambda$, so
$\lnuc{Q,\cdot} = \lambda\lnuc{Q,\cdot} = \lnuc{Q,\bullet} = \rnuc{Q,\cdot}$.

(4) For $a\in \mnuc{Q}$, $x\in Q$, $x\linv a\inv\cdot ax = x\linv\cdot a\inv a\cdot x = x\linv x = e$. Thus
$ax = (x\linv a\inv)\rinv = xa$ using the AIP. Thus $a\in C(Q)$.

For the remaining assertion, we need only check that $\lrnuc{Q}\subseteq C(Q)$. For $a\in \lrnuc{Q}$, $x\in Q$, we have
$(xa\cdot x\rinv)a\inv = xa\cdot x\rinv a\inv = xa\cdot (xa)\rinv = e$, using $a\inv\in \rnuc{Q}$ and the AIP.
Thus $xa\cdot x\rinv = a = a\cdot xx\rinv = ax\cdot x\rinv$ since $a\in \lnuc{Q}$. Cancelling $x\rinv$ on the right,
we get $xa = ax$. Thus $a\in C(Q)$.
\end{proof}

We say that a loop has \textit{left (middle, right) nuclear squares} if all squares are in the left (middle, right) nucleus. These are varieties of loops defined by the identities \eqref{Eqn:LNS}, \eqref{Eqn:MNS}, and \eqref{Eqn:RNS}, respectively.

%
%

\section{Principal isostrophes}
\label{Sec:isostrophes}

Associated to any loop $(Q,\cdot)$ are two other useful loops defined by the binary operations $x\ast y\coloneq x\linv \ldv y$ and $x\circ y\coloneq x\rdv y\rinv$. These loops are particular cases of isostrophes of $Q$ \cite{Pf};
we will refer to $(Q,\ast)$ and $(Q,\circ)$ as the \emph{principal left} and \emph{right isostrophes} of $Q$, respectively. The principal isostrophes have the same identity element as $(Q,\cdot)$. These loops reflect various structural features of $Q$ itself. For example,
$Q$ has the LIP (resp. RIP) if and only if $x\ast y = x\cdot y$ (resp. $x\circ y = x\cdot y$) for all $x,y\in Q$.

There being a clear duality between the principal left and right isostrophes of a loop $(Q,\cdot)$, we focus only on the
right isostrophe $(Q,\circ)$, leaving dual statements to the reader. The division operations of the principal right isostrophe
are given by
\[
x \ldv\!\ldv y = (y\ldv x)\linv \qquad\text{and}\qquad x \rdv\!\rdv y = xy\rinv
\]
for all $x,y\in Q$. It follows that left and right inverses in $(Q,\circ)$ are, respectively,
\[
x^{(\ell)}\coloneq e \rdv\!\rdv x = x\rinv\qquad\text{and}\qquad x^{(r)}\coloneq x\ldv\!\ldv e = x\linv
\]
for all $x\in Q$. We will denote the left and right translation maps in $(Q,\circ)$ by $L_x^{\circ}$ and
$R_x^{\circ}$ for $x\in Q$.

\begin{lem}\label{Lem:rder2}
Let $(Q,\cdot)$ be a loop with principal right isostrophe $(Q,\circ)$. Then the principal right isostrophe
of $(Q,\circ)$ is $(Q,\cdot)$.
\end{lem}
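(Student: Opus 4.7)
The plan is to apply the definition $x \circ y := x \rdv y\rinv$ to the loop $(Q,\circ)$ itself and simplify using the formulas already derived in the excerpt. Let me denote the principal right isostrophe of $(Q,\circ)$ by $(Q,\diamond)$, so that by definition
\[
x \diamond y = x \rdv\!\rdv y^{(r)},
\]
where $\rdv\!\rdv$ is right division in $(Q,\circ)$ and $y^{(r)}$ is the right inverse of $y$ in $(Q,\circ)$.

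The key inputs are already stated just above the lemma. First, right division in $(Q,\circ)$ satisfies $x \rdv\!\rdv y = x y\rinv$. Second, the right inverse of $y$ in $(Q,\circ)$ is $y^{(r)} = y\linv$. Substituting these into the display above gives
\[
x \diamond y = x \cdot (y\linv)\rinv.
\]
So everything reduces to verifying the single identity $(y\linv)\rinv = y$.

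For this last step, I would just observe that the defining relation $y\linv \cdot y = e$ exhibits $y$ as \emph{the} right inverse of $y\linv$ in $(Q,\cdot)$; by uniqueness of right inverses in a loop, $(y\linv)\rinv = y$. Plugging this back yields $x \diamond y = xy$, which is the desired conclusion. There is no genuine obstacle here: the lemma is a formal verification that principal right isostrophy is an involution on loops, and once the formulas for $\rdv\!\rdv$ and $y^{(r)}$ from the paragraph preceding the lemma are invoked, the computation is immediate.
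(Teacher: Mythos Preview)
Your proof is correct and follows essentially the same approach as the paper's: both compute $x\rdv\!\rdv y^{(r)} = x(y^{(r)})\rinv = x(y\linv)\rinv = xy$ using the formulas for $\rdv\!\rdv$ and $y^{(r)}$ stated just before the lemma. Your explicit justification that $(y\linv)\rinv = y$ via uniqueness of right inverses is a nice touch that the paper leaves implicit.
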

\begin{proof}
Indeed, for all $x,y\in Q$, $x\rdv\!\rdv y^{(r)} = x(y^{(r)})\rinv = x(y\linv)\rinv = xy$.
\end{proof}

Because the defining operations of each of the loops $(Q,\cdot)$ and $(Q,\circ)$ can be expressed in terms of
the other's operations, it follows that both loops have the same congruences, hence the same normal subloops.
We record this observation for later reference.

\begin{lem}\label{Lem:same}
Let $(Q,\cdot)$ be a loop with principal right isostrophe $(Q,\circ)$, and let $S\subseteq Q$. Then $S$
is a normal subloop of $(Q,\cdot)$ if and only if it is a normal subloop of $(Q,\circ)$.
\end{lem}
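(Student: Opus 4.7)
The plan is to use the characterization of normal subloops as kernels of loop homomorphisms. The key observation, already foreshadowed in the paragraph preceding the lemma, is that the operation $\circ$ is term-definable from the operations of $(Q,\cdot)$ via $x\circ y = x\rdv y\rinv$, while, by Lemma \ref{Lem:rder2}, the operation $\cdot$ is symmetrically term-definable from the operations of $(Q,\circ)$ via $x\cdot y = x\rdv\!\rdv y^{(r)}$.

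First I would verify that any loop homomorphism $\phi\colon (Q,\cdot)\to (Q',\cdot')$ automatically respects the division operations and one-sided inverses, since these are implicit in the loop structure. Specifically, applying $\phi$ to $a\cdot(a\ldv b) = b$ yields $\phi(a)\cdot'\phi(a\ldv b) = \phi(b)$, so by uniqueness of left quotients in $Q'$, $\phi(a\ldv b) = \phi(a)\ldv\phi(b)$; the dual argument handles $\rdv$, and then $\rinv$ is preserved because $x\rinv = x\ldv e$ (and $\phi(e) = e'$).

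With this in hand, I would compute directly that $\phi(x\circ y) = \phi(x\rdv y\rinv) = \phi(x)\rdv\phi(y)\rinv = \phi(x)\circ'\phi(y)$, so $\phi$ is also a homomorphism $(Q,\circ)\to(Q',\circ')$. By Lemma \ref{Lem:rder2} the assignment of principal right isostrophe is an involution, so the same reasoning applied with the roles of $\cdot$ and $\circ$ exchanged yields the converse: every loop homomorphism $(Q,\circ)\to(Q',\circ')$ is a loop homomorphism $(Q,\cdot)\to(Q',\cdot')$. Since normal subloops are precisely the preimages of the identity under loop homomorphisms (equivalently, the blocks of a congruence containing $e$), the two loops have the same normal subloops.

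The proof is essentially bookkeeping and I do not anticipate any serious obstacle; the one point that must be handled with care is the automatic preservation of divisions and inverses by loop homomorphisms, since it is only after noting this that the term-definability of each of $\cdot, \circ$ from the other can be used to transfer normality in both directions.
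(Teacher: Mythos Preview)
Your proposal is correct and is essentially the same argument as the paper's, which simply observes (in the paragraph immediately preceding the lemma) that because the operations of each loop are term-definable from those of the other, the two loops share the same congruences and hence the same normal subloops. You have unpacked the congruence formulation into the equivalent language of homomorphisms and kernels, but the underlying idea is identical.
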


Next we examine the relationship between the autotopism groups of $(Q,\cdot)$ and $(Q,\circ)$.

\begin{lem}\label{Lem:iso_atp}
Let $(Q,\cdot)$ be a loop with principal right isostrophe $(Q,\circ)$. For bijections $\alpha,\beta,\gamma\colon Q\to Q$,
$(\alpha,\beta,\gamma)\in \atp{Q,\circ}$ if and only if $(\gamma,\rho\beta\lambda,\alpha)\in \atp{Q,\cdot}$.
\end{lem}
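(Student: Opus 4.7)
The plan is to unfold the autotopism condition in $(Q,\circ)$ using the definition $x\circ y = x\rdv y\rinv$ and then perform two bijective changes of variable to convert it into an autotopism condition in $(Q,\cdot)$. The preparatory fact I would record first is that $\lambda$ and $\rho$ are mutually inverse bijections of $Q$: for any $w\in Q$, $(w\linv)\rinv$ is the unique $z$ with $w\linv\cdot z = e$, so $z = w$, and symmetrically $(w\rinv)\linv = w$; thus $\rho\lambda = \lambda\rho = \id{Q}$.

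Starting from the hypothesis $\alpha x\circ \beta y = \gamma(x\circ y)$ rewritten as
\[
(\alpha x)\rdv (\beta y)\rinv \;=\; \gamma\bigl(x\rdv y\rinv\bigr),
\]
I would substitute $y = \lambda w$, which is a bijective reparametrization of the $y$-variable. This simplifies $y\rinv$ to $\rho\lambda w = w$ and $(\beta y)\rinv$ to $\rho\beta\lambda w$, yielding
\[
(\alpha x)\rdv \rho\beta\lambda w \;=\; \gamma(x\rdv w).
\]
Next I would set $u = x\rdv w$, equivalently $x = u\cdot w$, which is again a bijective reparametrization of $x$ for each fixed $w$. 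Applying the defining property of $\rdv$ on the left-hand side, the displayed identity becomes
\[
\gamma u \cdot \rho\beta\lambda w \;=\; \alpha(u\cdot w),
\]
which, after renaming $u,w$ back to $x,y$, is exactly the assertion $(\gamma,\rho\beta\lambda,\alpha)\in\atp{Q,\cdot}$.

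Because every step is a bijective substitution rather than an inequality or a one-way implication, the same chain read in reverse delivers the converse, so no independent argument for the ``only if'' direction is required. There is no real obstacle here; the only point to be careful about is relocating the right inverse in $(\beta y)\rinv$ past $\beta$, which is precisely what forces $\beta$ to be conjugated by $\lambda$ and $\rho$ in the middle coordinate of the translated autotopism.
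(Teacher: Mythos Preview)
Your proof is correct and follows essentially the same approach as the paper: both unfold the autotopism condition for $(Q,\circ)$, then perform two bijective substitutions (the paper replaces $x$ by $xy\rinv$, multiplies through by $(\beta y)\rinv$, then replaces $y$ by $y\linv$; you replace $y$ by $\lambda w$ first, then $x$ by $uw$) to arrive at the autotopism condition for $(Q,\cdot)$. The arguments are the same up to the order in which the substitutions are carried out.
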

\begin{proof}
We have $(\alpha,\beta,\gamma)\in \atp{Q,\circ}$ if and only if $\alpha x \rdv (\beta y)\rinv = \gamma(x\rdv y\rinv)$ for
all $x,y\in Q$. Replace $x$ with $xy\rinv$, multiply on the right by $(\beta y)\rinv$, and then replace $y$ with $y\linv$.
It follows that $(\alpha,\beta,\gamma)\in \atp{Q,\circ}$ if and only if $\alpha(xy) = \gamma x\cdot (\beta y\linv)\rinv$
for all $x,y\in Q$, that is, if and only if $(\gamma,\rho\beta\lambda,\alpha)\in \atp{Q,\cdot}$.
\end{proof}

\begin{thm}\label{Thm:iso_nucs}
Let $(Q,\cdot,e)$ be a loop. Then:
\begin{enumerate}
\item $\lnuc{Q,\cdot} = \lnuc{Q,\circ}$;
\item $\mnuc{Q,\cdot} = \rnuc{Q,\circ}$.
\end{enumerate}
\end{thm}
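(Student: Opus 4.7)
The plan is to deduce both parts from Proposition \ref{Prp:id_nuc}, which characterizes nuclear membership via autotopisms containing an $\id{Q}$ entry, together with the transfer rule of Lemma \ref{Lem:iso_atp}. The key observation is that the transfer $(\alpha,\beta,\gamma)\leftrightarrow(\gamma,\rho\beta\lambda,\alpha)$ swaps the two outer slots while conjugating the middle slot by the mutually inverse permutations $\rho$ and $\lambda$; hence an $\id{Q}$ in the middle slot is preserved under the transfer, while an $\id{Q}$ in an outer slot migrates to the opposite outer slot. This is exactly why the left nucleus should be preserved under the passage from $(Q,\cdot)$ to $(Q,\circ)$ whereas the middle and right nuclei swap.

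For (1), Proposition \ref{Prp:id_nuc}(1) gives $a\in\lnuc{Q,\cdot}$ iff $(L_a,\id{Q},L_a)\in\atp{Q,\cdot}$. Since $\rho$ and $\lambda$ are mutually inverse, Lemma \ref{Lem:iso_atp} converts this to $(L_a,\id{Q},L_a)\in\atp{Q,\circ}$. A second application of Proposition \ref{Prp:id_nuc}(1), now inside $(Q,\circ)$, forces $L_a=L_b^{\circ}$ with $b=L_a(e)=a\in\lnuc{Q,\circ}$. The reverse inclusion is the same argument with the roles of $\cdot$ and $\circ$ interchanged, which is legitimate by Lemma \ref{Lem:rder2}.

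For (2), starting from $a\in\mnuc{Q,\cdot}$, Proposition \ref{Prp:id_nuc}(2) provides $(R_a\inv,L_{a\rinv}\inv,\id{Q})\in\atp{Q,\cdot}$; Lemma \ref{Lem:iso_atp} then yields $(\id{Q},\lambda L_{a\rinv}\inv\rho,R_a\inv)\in\atp{Q,\circ}$, and Proposition \ref{Prp:id_nuc}(3) applied in $(Q,\circ)$ forces $R_a\inv=R_b^{\circ}$ with $b=R_a\inv(e)=a\linv\in\rnuc{Q,\circ}$. The subtlety is that $a\linv$, rather than $a$ itself, is what lands in $\rnuc{Q,\circ}$.

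This is the only real obstacle, and I would close it by subloop closure: $\rnuc{Q,\circ}$ is a subloop of $(Q,\circ)$ and hence closed under the $(Q,\circ)$-left inverse, which by the computation of the paper is the map $x\mapsto x\rinv$. Since $a\linv\cdot a=e$ by definition of $a\linv$, the $\cdot$-right inverse $(a\linv)\rinv$ equals $a$, so $a\in\rnuc{Q,\circ}$. The reverse inclusion is dual: from $c\in\rnuc{Q,\circ}$ one extracts $(R_c^{\circ},\rho R_c^{\circ}\lambda,\id{Q})\in\atp{Q,\cdot}$ via Lemma \ref{Lem:iso_atp}, and Proposition \ref{Prp:id_nuc}(2) for $(Q,\cdot)$ together with $R_c^{\circ}=R_{c\rinv}\inv$ yields $c\rinv\in\mnuc{Q,\cdot}$, which gives $c\in\mnuc{Q,\cdot}$ by the same inverse argument. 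Everything outside of this inverse bookkeeping is a direct application of the two named tools.
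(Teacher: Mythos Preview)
Your proof is correct and follows essentially the same route as the paper: both parts are obtained by shuttling the autotopism characterizations of Proposition~\ref{Prp:id_nuc} through the transfer rule of Lemma~\ref{Lem:iso_atp}. The only cosmetic difference is in part (2): the paper parametrizes the middle-nucleus autotopism as $(R_{a\rinv}\inv,L_a\inv,\id{Q})$ rather than $(R_a\inv,L_{a\rinv}\inv,\id{Q})$, so that after the transfer the third coordinate evaluates at $e$ directly to $a$, making the argument a single chain of equivalences; your choice lands on $a\linv$ and then recovers $a$ via subloop closure under the $(Q,\circ)$-left inverse $x\mapsto x\rinv$, which is a perfectly valid extra step.
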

\begin{proof}
(1) By Proposition \ref{Prp:id_nuc}, $a\in \lnuc{Q,\cdot}$ if and only if $(L_a,\id{Q},L_a)\in \atp{Q,\cdot}$.
By Lemma \ref{Lem:iso_atp}, this holds if and only if $(L_a,\lambda\id{Q}\rho,L_a)= (L_a,\id{Q},L_a)\in \atp{Q,\circ}$.
By Proposition \ref{Prp:id_nuc}, this holds if and only if $(L_a^{\circ},\id{Q},L_a^{\circ})\in \atp{Q,\circ}$, that is,
if and only if $a\in \lnuc{Q,\circ}$.

(2) By Proposition \ref{Prp:id_nuc}, $a\in \mnuc{Q,\cdot}$ if and only if $(R_{a\rinv}\inv,L_a\inv,\id{Q},)\in \atp{Q,\cdot}$.
By Lemma \ref{Lem:iso_atp}, this holds if and only if $(\id{Q},\lambda L_a\inv \rho,R_{a\rinv})\in \atp{Q,\circ}$.
By Proposition \ref{Prp:id_nuc}, this holds if and only if $(\id{Q},R_a^{\circ},R_a^{\circ})\in \atp{Q,\circ}$, that is,
if and only if $a\in \rnuc{Q,\circ}$.
\end{proof}

\section{Loops with squares in two nuclei}
\label{Sec:two_nuc}

In this section we turn to the main loop varieties of interest in this paper: loops with squares in two nuclei. The case of middle and right nuclear squares is obviously dual to the case of left and middle nuclear squares, so we will only consider loops with left and middle nuclear squares and loops with left and right nuclear squares. The main result of the section is that the corresponding intersection of nuclei is a normal subloop.

\begin{lem}\label{Lem:sq_ids}
Let $Q$ be a loop and let $a\in Q$. If $a^2\in \lnuc{Q}$, then
\begin{equation}\label{Eqn:x2xlx}
    a^2 a\linv = a\qquad\text{and}\qquad L_{a^2} L_{a\linv} = L_a\,.
\end{equation}
\end{lem}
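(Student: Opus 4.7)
The plan is to use the characterization of the left nucleus from Proposition~\ref{Prp:id_nuc} (or equivalently the identities $L_a L_x = L_{ax}$ for $a \in \lnuc{Q}$) and apply it with a judicious choice of arguments involving $a\linv$. The second identity of \eqref{Eqn:x2xlx} is really a restatement of the first once we know $a^2 \in \lnuc{Q}$, so the work is in the first identity.

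First I would record what membership of $a^2$ in the left nucleus gives us: the triple $(L_{a^2}, \id{Q}, L_{a^2})$ is an autotopism, so
\[
(a^2 \cdot x) \cdot y = a^2 \cdot (x \cdot y) \qquad \text{for all } x,y \in Q.
\]
I would then specialize to $x = a\linv$ and $y = a$. The right-hand side collapses via $a\linv \cdot a = e$ to $a^2 \cdot e = a^2 = a \cdot a$, while the left-hand side becomes $(a^2 \cdot a\linv) \cdot a$. Comparing the two and cancelling $a$ on the right (using that $R_a$ is a bijection) yields $a^2 \cdot a\linv = a$, which is the first claim.

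For the second identity, I would observe that the left nuclear property of $a^2$ gives $L_{a^2}L_{x} = L_{a^2 \cdot x}$ for every $x \in Q$; specializing $x = a\linv$ and invoking the first identity gives $L_{a^2}L_{a\linv} = L_{a^2 \cdot a\linv} = L_a$.

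There is not really a hard step here: the only conceptual point is recognizing that the correct specialization is $x = a\linv$, $y = a$ (rather than $x = a$, $y = a\linv$), so that the inner product $a\linv \cdot a$ yields the identity element and $a^2$ factors as $a \cdot a$ on the other side, enabling right-cancellation by $a$.
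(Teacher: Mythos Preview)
Your proof is correct and is essentially identical to the paper's: both arguments use $a^2\in\lnuc{Q}$ to rewrite $(a^2 a\linv)\cdot a = a^2\cdot(a\linv a) = a^2$, then cancel $a$ on the right, and both derive the translation identity from $L_{a^2}L_{a\linv}=L_{a^2 a\linv}$.
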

\begin{proof}
We have $a^2 a\linv\cdot a = a^2\cdot a\linv a = a^2$ . Canceling $a$ on the right, we obtain $a^2 a\linv = a$. Now since $a^2\in \lnuc{Q}$, $L_{a^2} L_{a\linv} = L_{a^2 a\linv} = L_a$.
\end{proof}

\begin{thm}\label{Thm:lm=lr}
Let $(Q,\cdot)$ be a loop with principal right isostrophe $(Q,\circ)$. Then:
\begin{enumerate}
  \item $(Q,\cdot)$ has left nuclear squares if and only if $(Q,\circ)$ has left nuclear squares;
  \item $(Q,\cdot)$ has left and middle nuclear squares if and only if $(Q,\circ)$ has left and right nuclear squares.
\end{enumerate}
\end{thm}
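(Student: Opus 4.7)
The overall plan is to leverage Theorem \ref{Thm:iso_nucs} (which identifies $\lnuc{Q,\cdot} = \lnuc{Q,\circ}$ and $\mnuc{Q,\cdot} = \rnuc{Q,\circ}$), Lemma \ref{Lem:sq_ids} (which turns LNS into a useable identity for the square and its left inverse), and Lemma \ref{Lem:rder2} (so that each biconditional needs to be proved in only one direction).

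For part (1), the core step is to show, under LNS in $(Q,\cdot)$, that $a\circ a = \bigl((a\rinv)^2\bigr)\rinv$. I would proceed as follows. LNS applied to $a\rinv$ places $(a\rinv)^2$ in $\lnuc{Q,\cdot}$; Lemma \ref{Lem:sq_ids} applied with $a$ replaced by $a\rinv$, combined with the trivial identity $(a\rinv)\linv = a$, yields $(a\rinv)^2 \cdot a = a\rinv$. A routine calculation --- using that elements of any nucleus are two-sidedly invertible (immediate from nuclear associativity: if $b\in\lnuc{Q}$ then $b(b\linv\cdot b) = (b\cdot b\linv)b$, forcing $b\cdot b\linv = b\rdv b = e$) and that $L_{(a\rinv)^2}$ is injective --- then gives $\bigl((a\rinv)^2\bigr)\rinv\cdot a\rinv = a$, that is, $\bigl((a\rinv)^2\bigr)\rinv = a\rdv a\rinv = a\circ a$. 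Since the nucleus is a subloop, it is closed under inversion, so $a\circ a \in \lnuc{Q,\cdot} = \lnuc{Q,\circ}$, which is exactly LNS in $(Q,\circ)$. The converse follows by applying the same argument to $(Q,\circ)$, whose principal right isostrophe is $(Q,\cdot)$ by Lemma \ref{Lem:rder2}.

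Part (2) then follows quickly. Part (1) transfers LNS, and the identity $a\circ a = \bigl((a\rinv)^2\bigr)\rinv$ is available whenever LNS holds. Combining with Theorem \ref{Thm:iso_nucs}(2), namely $\mnuc{Q,\cdot} = \rnuc{Q,\circ}$: $(Q,\circ)$ has RNS iff $a\circ a \in \rnuc{Q,\circ}$ for all $a$, iff $\bigl((a\rinv)^2\bigr)\rinv \in \mnuc{Q,\cdot}$, iff (by closure of the middle nucleus under inversion) $(a\rinv)^2 \in \mnuc{Q,\cdot}$, and since $\rho\colon a\mapsto a\rinv$ is a bijection of $Q$ this last condition is simply MNS in $(Q,\cdot)$.

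The main obstacle is proving the identification $a\circ a = \bigl((a\rinv)^2\bigr)\rinv$. The subtlety is that $a\circ a = a\rdv a\rinv$ does not equal $a^2 = a\rdv a\linv$ (the expression Lemma \ref{Lem:sq_ids} gives directly) in a general LNS loop, because LNS by itself need not force $a\linv = a\rinv$. The trick is to apply Lemma \ref{Lem:sq_ids} to $a\rinv$ rather than $a$; this places the relevant square inside a nucleus, where two-sided inverses automatically exist, and the missing inverse identity then appears ``for free'' from nuclear associativity rather than being assumed of $(Q,\cdot)$ globally.
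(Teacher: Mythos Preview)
Your proposal is correct and follows essentially the same route as the paper. Both arguments establish the key identity $a\circ a = \bigl((a\rinv)^2\bigr)^{-1}$ via Lemma~\ref{Lem:sq_ids} and the fact that left nuclear elements have two-sided inverses, then invoke Theorem~\ref{Thm:iso_nucs} and Lemma~\ref{Lem:rder2}; the only difference is cosmetic---the paper first derives $(x^2)^{-1} = x\linv\circ x\linv$ and then substitutes $x\mapsto x\rinv$, whereas you apply Lemma~\ref{Lem:sq_ids} directly to $a\rinv$, and you spell out part~(2) in more detail than the paper's one-line appeal to part~(1) and Theorem~\ref{Thm:iso_nucs}.
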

\begin{proof}
(1) Assume $(Q,\cdot)$ has left nuclear squares. For all $x\in Q$, $x^2\cdot (x^2)\inv x = x = x^2 x\linv$, using \eqref{Eqn:x2xlx}.
Cancelling, we have $(x^2)\inv x = x\linv$, or equivalently, $(x^2)\inv = x\linv \rdv x = x\linv\circ x\linv$.
Thus $x\circ x = ((x\rinv)^2)\inv$ for all $x\in Q$. By Theorem \ref{Thm:iso_nucs}(1), $(Q,\circ)$ has left nuclear squares.
The converse follows from Lemma \ref{Lem:rder2}.

(2) This follows from (1) and Theorem \ref{Thm:iso_nucs}.
\end{proof}

In view of Theorem \ref{Thm:iso_nucs}(2), it is natural to wonder if Theorem \ref{Thm:lm=lr}(2) can be improved by dropping the conditions on left nuclear squares.

\begin{exm}
The following tables show a loop $(Q,\cdot)$ and its principal right isostrophe $(Q,\circ)$. Here $\mnuc{Q,\cdot} = \rnuc{Q,\circ} = \{1,2\}$. From examining the main diagonals of each table, we see that $(Q,\cdot)$ has middle nuclear squares but $(Q,\circ)$ does not have right nuclear squares.
\[
\begin{array}{c|cccccc}
\cdot & 1 & 2 & 3 & 4 & 5 & 6 \\
\hline
1 & 1 & 2 & 3 & 4 & 5 & 6\\
2 & 2 & 1 & 4 & 3 & 6 & 5\\
3 & 3 & 5 & 1 & 6 & 4 & 2\\
4 & 4 & 6 & 5 & 2 & 1 & 3\\
5 & 5 & 3 & 6 & 1 & 2 & 4\\
6 & 6 & 4 & 2 & 5 & 3 & 1
\end{array}
\qquad
\begin{array}{c|cccccc}
\circ & 1 & 2 & 3 & 4 & 5 & 6 \\
\hline
1 & 1 & 2 & 3 & 4 & 5 & 6\\
2 & 2 & 1 & 6 & 5 & 4 & 3\\
3 & 3 & 5 & 1 & 6 & 2 & 4\\
4 & 4 & 6 & 2 & 3 & 1 & 5\\
5 & 5 & 3 & 4 & 1 & 6 & 2\\
6 & 6 & 4 & 5 & 2 & 3 & 1
\end{array}
\]
\end{exm}

The first main result of this section is the following.

\begin{thm}
\label{Thm:LMNuc_normal}
Let $Q$ be a loop with left and middle nuclear squares and let $N\coloneq \lmnuc{Q}$ Then $L_{(N)}$ is a normal subgroup of $\mlt{Q}$ and $N$ is a normal subloop of $Q$.
\end{thm}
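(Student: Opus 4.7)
The plan is to prove that $L_{(N)} = \{L_a \mid a \in N\}$ is a normal subgroup of $\mlt{Q}$; normality of $N$ as a subloop then follows because the orbit of $e$ under $L_{(N)}$ is exactly $N$ (since $L_a e = a$ for $a \in N$), and orbits of normal subgroups of $\mlt{Q}$ on $Q$ are normal subloops, as reviewed in {\S}\ref{Sec:basics}.

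That $L_{(N)}$ is a subgroup of $\mlt{Q}$ is routine: $N \subseteq \lnuc{Q}$ is associative, hence a group, so every $a \in N$ has a two-sided inverse $a\inv \in N$; then $L_a L_b = L_{ab}$ (from $a \in \lnuc{Q}$ or from $b \in \mnuc{Q}$) together with $L_a L_{a\inv} = L_{aa\inv} = L_e = \id{Q}$ give closure and inverses. Since $\mlt{Q}$ is generated by $\{L_x, R_x \mid x \in Q\}$, normality reduces to checking conjugation by $L_x$ and $R_x$ for every $x \in Q$. The $R_x$-case is free: $a \in \lnuc{Q}$ yields $L_a R_x = R_x L_a$ by the characterization of $\lnuc{Q}$ in {\S}\ref{Sec:basics}, hence $R_x L_a R_x\inv = L_a \in L_{(N)}$.

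The $L_x$-case is the substantive content and the main obstacle. Using $a \in \mnuc{Q}$, $L_x L_a = L_{xa}$, so $L_x L_a L_x\inv = L_{xa} L_x\inv$. The natural candidate for this to equal is $L_b$ with $b := (xa)\rdv x$, i.e., $bx = xa$; matching $L_{xa} L_x\inv = L_b$ amounts to the partial left-nuclear identity $b(xz) = (xa)z$ for all $z \in Q$, together with the claim $b \in \lnuc{Q} \cap \mnuc{Q} = N$. The difficulty is that $b$ is characterized by only a single equation, whereas the nuclear identities for $b$ are universal in two variables. The strategy I would pursue is the autotopism calculus of Proposition \ref{Prp:id_nuc}: from $a \in N$ we have $(L_a, \id{Q}, L_a)$ and $(R_{a\inv}, L_a, \id{Q})$ in $\atp{Q}$, and LNS+MNS additionally provide the analogous autotopisms $(L_{x^2}, \id{Q}, L_{x^2})$ and $(R_{(x^2)\inv}, L_{x^2}, \id{Q})$ for every square $x^2 \in N$. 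Composing, inverting, and evaluating these autotopisms---together with $L_u L_v = L_{uv}$ when $u \in \lnuc{Q}$ or $v \in \mnuc{Q}$, and $L_u R_v = R_v L_u$ when $u \in \lnuc{Q}$---should yield autotopisms of the forms $(L_b, \id{Q}, L_b)$ and $(R_{b\inv}, L_b, \id{Q})$, which by Proposition \ref{Prp:id_nuc} directly encode $b \in \lnuc{Q}$ and $b \in \mnuc{Q}$ respectively.

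With both conjugation cases handled, $L_{(N)} \trianglelefteq \mlt{Q}$, and the opening orbit-of-$e$ argument yields $N \trianglelefteq Q$, completing the proof.
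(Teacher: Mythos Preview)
Your overall architecture is correct and matches the paper's: reduce normality of $L_{(N)}$ to conjugation by the generators $L_x$, $R_x$; dispose of $R_x$ via $L_a R_x = R_x L_a$ for $a\in\lnuc{Q}$; then deduce $N\trianglelefteq Q$ as the $L_{(N)}$-orbit of $e$. The genuine gap is the $L_x$-conjugation step, which you only sketch. The autotopisms you list---$(L_c,\id{Q},L_c)$ and $(R_{c^{-1}},L_c,\id{Q})$ for $c\in N$, including all squares $c=x^2$---have every component built from translations by elements of $N$. Composing and inverting such triples keeps all three components inside the group generated by $\{L_c,R_c : c\in N\}$; the map $L_x$ for a general $x\notin N$ never enters any component, so there is no evident way to produce an autotopism with a component equal to $L_x L_a L_x^{-1}$. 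The identity $L_u L_v = L_{uv}$ (for $u\in\lnuc{Q}$ or $v\in\mnuc{Q}$) lets you simplify a component once you have it, but does not introduce the missing $L_x$.

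The paper closes this gap by direct computation rather than autotopism calculus. The pivotal step (Lemma~\ref{Lem:xax}) is that $a\cdot x(ax)=(ax)^2\in N$ forces $xax\in N$; this much you \emph{can} recast as the autotopism identity $(L_a,\id{Q},L_a)^{-1}(L_{(ax)^2},\id{Q},L_{(ax)^2})=(L_{xax},\id{Q},L_{xax})$. But passing from ``$xax\in N$ for all $x$'' to ``$L_x L_a L_x^{-1}\in L_{(N)}$'' requires additional translation identities that do not fall out of autotopism composition: specifically (Lemma~\ref{Lem:sq_ids2}) $L_{x\linv}L_x=L_xL_{x\rinv}$ and $L_xL_{x\rinv}^{-1}=L_{xx^{rr}}$ with $xx^{rr}=((x\rinv)^2)^{-1}\in N$. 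With these in hand, the paper (Lemma~\ref{Lem:Lconj}) explicitly computes $L_x L_a L_x^{-1}=L_u$ with $u=((x\rinv)^2)^{-1}\cdot x\rinv a x\rinv\in N$, and similarly $L_x^{-1}L_aL_x=L_{x\ldv(ax)}$. That computational bridge---not the autotopism bookkeeping---is the content your proposal is missing.
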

\noindent The proof requires a few lemmas.

\begin{lem}\label{Lem:sq_ids2}
Let $Q$ be a loop with left and middle nuclear squares. For all $x\in Q$,
\begin{align}
    L_{x\linv} L_{x^2}  &= L_{x^{\ell\ell}}                                 \label{Eqn:xlx2xll} \\
    L_{x\linv} L_x      &= L_x L_{x\rinv}                                   \label{Eqn:xlxxxr} \\
    L_x L_{x\rinv}\inv  &= L_{xx^{rr}}\text{ and } xx^{rr}\in \lmnuc{Q}     \label{Eqn:xxrr}
\end{align}
\end{lem}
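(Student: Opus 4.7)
The plan is to prove the three identities in turn, using only Lemma \ref{Lem:sq_ids} together with the nuclear hypotheses; in particular, no further autotopism machinery is needed.

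For \eqref{Eqn:xlx2xll}, I would start from the identity $x^2\cdot x\linv=x$ recorded in \eqref{Eqn:x2xlx}. Since $x^2\in\mnuc{Q}$, the composition $L_{x\linv}L_{x^2}$ collapses to the single left translation $L_{x\linv\cdot x^2}$. To identify $x\linv\cdot x^2$ with $x^{\ell\ell}$, compute
\[
(x\linv\cdot x^2)\cdot x\linv \;=\; x\linv\cdot(x^2\cdot x\linv) \;=\; x\linv\cdot x \;=\; e,
\]
where the first equality uses $x^2\in\mnuc{Q}$ and the second is \eqref{Eqn:x2xlx}. Hence $x\linv\cdot x^2$ is the left inverse of $x\linv$, namely $x^{\ell\ell}$.

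Identity \eqref{Eqn:xlxxxr} then drops out by applying \eqref{Eqn:xlx2xll} with $x\rinv$ in place of $x$. Since $x\cdot x\rinv=e$ we have $(x\rinv)\linv=x$, and therefore $(x\rinv)^{\ell\ell}=x\linv$, so \eqref{Eqn:xlx2xll} reads $L_xL_{(x\rinv)^2}=L_{x\linv}$. At the same time, Lemma \ref{Lem:sq_ids} applied to $x\rinv$ gives $L_{(x\rinv)^2}L_x=L_{x\rinv}$, whence $L_{(x\rinv)^2}=L_{x\rinv}L_x\inv$. Substitution into the previous display yields $L_xL_{x\rinv}L_x\inv=L_{x\linv}$, which rearranges to \eqref{Eqn:xlxxxr}.

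The heart of the lemma is \eqref{Eqn:xxrr}; the key observation is that $xx^{rr}$ must be the two-sided inverse of $(x\rinv)^2$. Using $(x\rinv)^2\in\lnuc{Q}$, the identity $(x\rinv)^2\cdot x=x\rinv$ from Lemma \ref{Lem:sq_ids} applied to $x\rinv$, and the definition of $x^{rr}$,
\[
(x\rinv)^2\cdot xx^{rr} \;=\; \bigl((x\rinv)^2\cdot x\bigr)\,x^{rr} \;=\; x\rinv\cdot x^{rr} \;=\; e.
\]
But $(x\rinv)^2\in\mnuc{Q}$ has a two-sided inverse, which must therefore equal $xx^{rr}$. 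Each nucleus is an associative subloop, hence a subgroup of $Q$; since $(x\rinv)^2\in\lmnuc{Q}$, its inverse $xx^{rr}$ also lies in $\lmnuc{Q}$. To finish, use $xx^{rr}\in\mnuc{Q}$ to obtain $(xx^{rr}\cdot x\rinv)\,x\rinv=xx^{rr}\cdot(x\rinv)^2=e$, which shows $xx^{rr}\cdot x\rinv$ is the left inverse of $x\rinv$, namely $x$. Finally, $xx^{rr}\in\lnuc{Q}$ gives $L_{xx^{rr}}L_{x\rinv}=L_{xx^{rr}\cdot x\rinv}=L_x$, equivalent to \eqref{Eqn:xxrr}.

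The main obstacle is spotting the identification $xx^{rr}=((x\rinv)^2)\inv$. Direct manipulation of $x^{rr}$ stalls because no nuclear hypothesis applies to $x^{rr}$ on its own; but once one pairs $x^{rr}$ with $x$ on the left and rewrites $x\cdot x^{rr}$ through the nuclear square $(x\rinv)^2$, everything reduces to routine bookkeeping inside the subgroup $\lmnuc{Q}$.
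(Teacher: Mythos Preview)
Your argument is correct and follows essentially the same route as the paper: identify $x\linv x^2 = x^{\ell\ell}$ via middle nuclear squares, derive \eqref{Eqn:xlxxxr} by substituting $x\rinv$ (the paper does this one step earlier, writing $L_{x\linv}L_x = L_{x\linv}L_{x^2}L_{x\linv} = L_{x^{\ell\ell}}L_{x\linv}$ before substituting, but the content is the same), and then pin down $xx^{rr} = ((x\rinv)^2)\inv$ to push it into $\lmnuc{Q}$. One small slip: in the step $(xx^{rr}\cdot x\rinv)\,x\rinv = xx^{rr}\cdot(x\rinv)^2$ you need $xx^{rr}\in\lnuc{Q}$, not $\mnuc{Q}$; since you already have $xx^{rr}\in\lmnuc{Q}$ this is just a mis-citation, not a gap.
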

\begin{proof}
We have $x\linv x^2\cdot x\linv = x\linv \cdot x^2 x\linv = x\linv x = e$ using $x^2\in \mnuc{Q}$ and \eqref{Eqn:x2xlx}.
Thus $x\linv x^2 = x^{\ell\ell}$.
Now since $x^2\in \mnuc{Q}$, $L_{x\linv} L_{x^2} = L_{x\linv x^2} = L_{x^{\ell\ell}}$. This proves \eqref{Eqn:xlx2xll}.

Now we compute
\[
L_{x\linv} L_x \byeqn{Eqn:x2xlx} L_{x\linv} L_{x^2} L{x\linv} \byeqn{Eqn:xlx2xll} L_{x^{\ell\ell}} L_{x\linv}\,.
\]
Replacing $x$ with $x\rinv$ completes the proof of \eqref{Eqn:xlxxxr}.

For the second claim in \eqref{Eqn:xxrr}, we use \eqref{Eqn:x2xlx} to compute $(x\rinv)^2\cdot xx^{rr} = x^r x^rr = e$, and
thus $xx^{rr} = ((x\rinv)^2)\inv\in \lmnuc{Q}$. Using this, we compute $(xx^{rr}\cdot x^r)x^{rr} = xx^{rr}\cdot x^r x^{rr} = xx^{rr}$;
cancelling $x^{rr}$ on the right gives $xx^{rr}\cdot x^r = x$. Again using $xx^{rr}\in \lmnuc{Q}$, we have $L_{xx^{rr}} L_{x\rinv} =
L_{xx^{rr}\cdot x\rinv} = L_x$. Rearranging, we have proved \eqref{Eqn:xxrr}.
\end{proof}

\begin{lem}\label{Lem:xax}
Let $Q$ be a loop with left and middle nuclear squares. For all $a\in \lmnuc{Q}$ and for all $x\in Q$, $xax\in \lmnuc{Q}$.
\end{lem}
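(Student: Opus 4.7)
The plan is to exhibit $xax$ as a product of two elements already known to lie in $\lmnuc{Q}$, rather than verify the nuclear identities for $xax$ directly. This relies on the fact that $\lmnuc{Q}$ is itself a subloop: each nucleus is an associative subloop (hence a group), so $\lmnuc{Q}=\lnuc{Q}\cap\mnuc{Q}$ is a group. In particular, the two-sided inverse $a\inv$ exists in $Q$ and lies in $\lmnuc{Q}$. Moreover, because $Q$ has both left and middle nuclear squares, every square of $Q$ lies in $\lmnuc{Q}$; in particular $(ax)^2\in\lmnuc{Q}$.

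The computational heart of the argument is the identity
\[
xax \;=\; a\inv\cdot(ax)^2.
\]
To establish it, expand $(ax)(ax)$ using left nuclearity of $a$ to obtain $a\cdot\bigl(x(ax)\bigr)$, and then use middle nuclearity of $a$ to rewrite $x(ax)=(xa)x=xax$. This gives $(ax)^2=a\cdot xax$, and left-multiplying by $a\inv$ (again using left nuclearity, now of $a\inv$) yields the claimed identity. Since both $a\inv$ and $(ax)^2$ lie in $\lmnuc{Q}$, which is closed under multiplication, it follows that $xax\in\lmnuc{Q}$.

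The only real obstacle is spotting that $(ax)^2$, rather than $x^2$ or $a^2$, is the useful square to exploit; once one has the identity $xax=a\inv\cdot(ax)^2$, everything else is routine manipulation using left nuclearity and the subloop property of $\lmnuc{Q}$.
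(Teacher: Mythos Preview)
Your argument is correct. Both your proof and the paper's hinge on the same key identity $(ax)^2=a\cdot xax$, obtained exactly as you describe: left nuclearity of $a$ gives $(ax)(ax)=a(x\cdot ax)$, and middle nuclearity of $a$ collapses $x\cdot ax=(xa)x$ so that $xax$ is unambiguous.

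The difference lies only in how the conclusion is drawn. You invert to write $xax=a\inv\cdot(ax)^2$ and invoke closure of the group $\lmnuc{Q}$ under products and inverses. The paper instead verifies the left- and middle-nuclear conditions for $xax$ directly at the level of translation maps: from $L_a L_{xax}=L_{(ax)^2}$ it computes $L_a L_{xax}L_y=L_{(ax)^2}L_y=L_{(ax)^2 y}=L_a L_{xax\cdot y}$ (using $(ax)^2\in\lnuc{Q}$) and cancels $L_a$ to get $xax\in\lnuc{Q}$; a parallel computation with $L_y L_{xax}$, routing through $y=(y\rdv a)\cdot a$, gives $xax\in\mnuc{Q}$. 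Your route is shorter and more conceptual, since it replaces two translation-map calculations by a single appeal to the subloop structure of $\lmnuc{Q}$; the paper's route has the minor advantage of making the translation identities $L_{xax}L_y=L_{xax\cdot y}$ and $L_yL_{xax}=L_{y\cdot xax}$ explicit, which fits the style of the surrounding lemmas.
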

\begin{proof}
We compute
\[
L_a L_{xax} L_y = L_{(ax)^2} L_y = L_{(ax)^2 y} = L_{a\cdot xax\cdot y} = L_a L_{xax\cdot y}\,,
\]
using $a\in \lmnuc{Q}$ in the first, third and fourth equalities, and left nuclear squares in the second.
Thus $xax\in \lnuc{Q}$.

Now
\begin{align*}
L_y L_{xax} &= L_{y\rdv a\cdot a} L_{xax} = L_{y\rdv a} L_a L_{xax} = L_{y\rdv a} L_{(ax)^2} \\
&= L_{(y\rdv a)(ax)^2} = L_{(y\rdv a)a\cdot xax} = L_{y\cdot xax}\,,
\end{align*}
using $a\in \lmnuc{Q}$ in the second, third and fifth equalites, and middle nuclear squares in the fourth.
Thus $xax\in \mnuc{Q}$.
\end{proof}

\begin{lem}\label{Lem:Lconj}
Let $Q$ be a loop with left and middle nuclear squares. For all $a\in \lmnuc{Q}$ and for all $x\in Q$,
\begin{enumerate}
  \item $L_x L_a L_x\inv = L_{xax\rinv}$ and $xax\rinv\in \lmnuc{Q}$;
  \item $L_x\inv L_a L_x = L_{x\ldv (ax)}$ and $x\ldv (ax)\in \lmnuc{Q}$.
\end{enumerate}
\end{lem}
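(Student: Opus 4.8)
The plan is to prove both parts of Lemma~\ref{Lem:Lconj} by directly identifying the conjugated left-translation map as a single left translation and then verifying that the relevant element lands in $\lmnuc{Q}$. Since Lemma~\ref{Lem:xax} already tells us that $xax \in \lmnuc{Q}$ whenever $a \in \lmnuc{Q}$, the natural strategy is to manipulate the conjugate $L_x L_a L_x\inv$ (respectively $L_x\inv L_a L_x$) into a form that invokes $xax$ and the square-identities of Lemma~\ref{Lem:sq_ids2}, especially \eqref{Eqn:xxrr}, which both produces a $\lmnuc{Q}$ element and expresses $L_x L_{x\rinv}\inv$ as a single left translation.

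For part (1), I would start by trying to show $L_x L_a L_x\inv = L_{xax\rinv}$. The cleanest route is to establish the equivalent statement $L_x L_a = L_{xax\rinv} L_x$, i.e. $L_x L_a L_{x}\inv = L_{xax\rinv}$, by evaluating both sides or by composing with translations whose product we already understand. I expect to use $a \in \lnuc{Q}$ to move $L_a$ through products, and then use the identity $xax\rinv \in \lmnuc{Q}$ (which should follow from Lemma~\ref{Lem:xax} applied to a suitably chosen element, or directly from the fact that $xax\rinv$ relates to $x(ax\rinv)$ via the nuclear properties). The key computational observation is that, because $a$ is in both the left and middle nucleus, the expression $xax\rinv$ can be rewritten and recognized as an element of $\lmnuc{Q}$; once membership is known, $L_{xax\rinv}$ acts as a bona fide left translation that commutes or associates freely with the relevant products, closing the identity.

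Part (2) should be the dual/companion computation, and I would handle it by symmetry of method rather than by appeal to an actual duality. Here the target is $L_x\inv L_a L_x = L_{x\ldv(ax)}$. The natural first step is to apply both sides to $e$: the right side gives $x\ldv(ax)$ directly, which identifies the candidate element, and then I would verify that $x\ldv(ax) \in \lmnuc{Q}$, again via Lemma~\ref{Lem:xax} or by massaging $x\ldv(ax)$ using $a \in \lmnuc{Q}$ and the left-division identities. Once membership in $\lmnuc{Q}$ is secured, the equality of the two maps follows because a left translation by a left-nuclear element is completely determined by its image of $e$ together with the associativity it affords.

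The main obstacle I anticipate is the bookkeeping around inverses and the interplay between $x\rinv$, $x\linv$, and the $\lmnuc{Q}$-membership: the identities in Lemma~\ref{Lem:sq_ids2}, particularly \eqref{Eqn:xxrr} with its element $xx^{rr} \in \lmnuc{Q}$, are stated in terms of specific inverse-decorated products, and I will need to align the element $xax\rinv$ (or $x\ldv(ax)$) with those forms rather than with a naive guess. In other words, the hard part is not the high-level strategy---conjugation of a nuclear left translation is again a nuclear left translation---but confirming that the particular element named in the statement is genuinely the one produced by the conjugation, and that it indeed lies in $\lmnuc{Q}$ so that the left-translation identity is legitimate. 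I would expect to lean on Lemma~\ref{Lem:xax} as the decisive tool for the membership claims, since it already packages the nontrivial verification that products of the shape ``$x$ times a nuclear element times $x$'' stay in $\lmnuc{Q}$.
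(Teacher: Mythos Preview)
Your outline identifies the right ingredients (Lemma~\ref{Lem:xax} and the identities of Lemma~\ref{Lem:sq_ids2}), but the logical order you propose has a genuine gap. You write that once membership $x\ldv(ax)\in\lmnuc{Q}$ is secured, ``the equality of the two maps follows because a left translation by a left-nuclear element is completely determined by its image of $e$.'' That inference is not valid: agreement at $e$ tells you only that \emph{if} $L_x\inv L_a L_x$ is a left translation, then it must be $L_{x\ldv(ax)}$. You still owe a proof that the conjugate is a left translation at all, and that is exactly the content of the lemma. The same objection applies to your treatment of part~(1).

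Relatedly, you propose to establish $xax\rinv\in\lmnuc{Q}$ \emph{first}, via ``Lemma~\ref{Lem:xax} applied to a suitably chosen element,'' and then use that to verify the translation identity. But Lemma~\ref{Lem:xax} gives $xax\in\lmnuc{Q}$, not $xax\rinv$, and there is no obvious substitution that converts one into the other. In the paper's proof the dependency runs the other way: one first computes $L_x L_a L_x\inv$ explicitly as $L_u$ for a concrete $u=((x\rinv)^2)\inv\cdot x\rinv a x\rinv$, which is visibly in $\lmnuc{Q}$ because it is a product of a square-inverse with an $x\rinv a x\rinv$ term (to which Lemma~\ref{Lem:xax} \emph{does} apply). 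Only after the equality of maps is established does one evaluate at $e$ to discover $u=xax\rinv$, and membership follows. Part~(2) is handled the same way, introducing an auxiliary $v\in\lmnuc{Q}$ and using \eqref{Eqn:xxrr} to close the computation before evaluating at $e$. Your plan needs to be reorganized along these lines: compute the conjugate down to a single $L_u$ with $u$ manifestly in $\lmnuc{Q}$, and only then read off the identity of $u$.
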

\begin{proof}
(1) First we prove
\begin{equation}\label{Eqn:Lconj-1}
L_{x\rinv a x\rinv} L_{xa\inv} = L_{x\rinv}\,.
\end{equation}
Indeed,
\[
L_a L_{x\rinv a x\rinv} L_{xa\inv} = L_{(ax\rinv)^2} L_{(ax\rinv)\linv} = L_{ax\rinv} = L_a L_{x\rinv}\,,
\]
using $a\in \lmnuc{Q}$ and \eqref{Eqn:x2xlx}. Canceling on $L_a$ gives \eqref{Eqn:Lconj-1}.

Now we compute
\begin{align*}
L_x L_a L_x\inv &= L_x (L_x L_{a\inv})\inv = L_x L_{xa\inv}\inv
= L_{(x\rinv)\linv} L_{x\rinv}\inv\cdot L_{x\rinv} L_{xa\inv}\inv \\
&= L_{((x\rinv)^2)\inv} L_{x\rinv a x\rinv}  = L_u
\end{align*}
where $u = ((x\rinv)^2)\inv\cdot x\rinv a x\rinv$,
using $a\inv\in \mnuc{Q}$ in the second equality, \eqref{Eqn:x2xlx} and \eqref{Eqn:Lconj-1} in the fourth, and left nuclear squares (or Lemma \ref{Lem:xax}) in the fifth. Our assumption on squares,
together with Lemma \ref{Lem:xax}, imply that $u\in \lmnuc{Q}$.
If we apply both sides of the preceding calculation to $1$, we get
$xax\rinv = u$ as desired.

(2) Recalling that $x\ldv a = (a\inv x)\rinv$, we have
\[
L_{x\ldv a} = L_{(a\inv x)\rinv} \byeqn{Eqn:x2xlx} L_{((a\inv x)\rinv)^2} L_{a\inv x}
= L_{((a\inv x)\rinv)^2} L_{a\inv} L_x = L_v L_x = L_{vx}\,,
\]
using $a\inv\in \mnuc{Q}$ in the third equality and $v = ((a\inv x)\rinv)^2\cdot a\inv\in \lmnuc{Q}$
in the fourth. Thus $a = xvx$ and so $xv = a\rdv x = ax\linv$.

Next,
\[
L_x L_v L_x = L_{xv} L_x = L_{ax\linv} L_x
= L_a L_{x\linv} L_x \byeqn{Eqn:xlxxxr} L_a L_x L_{x\rinv}\,,
\]
using the remark two lines above in the second equality and $a\in \lnuc{Q}$ in the third. Thus
\[
L_x\inv L_a L_x = L_v L_x L_{x\rinv}\inv \byeqn{Eqn:xxrr} L_v L_{xx^{rr}} \byeqn{Eqn:xxrr} L_{v\cdot xx^{rr}}\,.
\]
Applying both sides to $e$, we get $x\ldv ax = v\cdot xx^{rr}\in \lmnuc{Q}$, and thus
$L_x\inv L_a L_x = L_{x\ldv ax}$. This completes the proof of (2).
\end{proof}

We are ready for the following.

\begin{proof}[Proof of Theorem \ref{Thm:LMNuc_normal}]
(1) For any $a\in \lnuc{Q}$, $L_a R_x = R_x L_a$ for all $x\in Q$. Thus $\rmlt{Q}$ centralizes $L_{(N)}$. By Lemma \ref{Lem:Lconj},
$\lmlt{Q}$ normalizes $L_{(N)}$. This establishes the normality of $L_{(N)}$ in $\mlt{Q}$.

(2) This follows immediately from (1) since $N = \lmnuc{Q}$ is the orbit of $1$ under the action of the normal subgroup $L_{(N)}$ of $\mlt{Q}$.
\end{proof}

Our second main result of the section follows.

\begin{thm}
\label{Thm:LRNuc_normal}
Let $Q$ be a loop with left and right nuclear squares. Then $\lrnuc{Q}$ is a normal subloop of $Q$.
\end{thm}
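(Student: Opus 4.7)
The plan is to transfer the result from Theorem \ref{Thm:LMNuc_normal} via the principal right isostrophe. Given $(Q,\cdot)$ with left and right nuclear squares, I would consider its principal right isostrophe $(Q,\circ)$. By Lemma \ref{Lem:rder2}, $(Q,\cdot)$ is in turn the principal right isostrophe of $(Q,\circ)$, so Theorem \ref{Thm:lm=lr}(2) applied to the pair $\bigl((Q,\circ),(Q,\cdot)\bigr)$ gives that $(Q,\circ)$ has left and middle nuclear squares.

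Next, I would invoke Theorem \ref{Thm:LMNuc_normal} for $(Q,\circ)$ to conclude that $\lmnuc{Q,\circ}$ is a normal subloop of $(Q,\circ)$. The key identification is that, as subsets of $Q$,
\[
\lmnuc{Q,\circ} = \lnuc{Q,\circ}\cap \mnuc{Q,\circ} = \lnuc{Q,\cdot}\cap \rnuc{Q,\cdot} = \lrnuc{Q,\cdot},
\]
where the first equality on the right uses Theorem \ref{Thm:iso_nucs}(1) and the second uses Theorem \ref{Thm:iso_nucs}(2), the latter applied with $(Q,\circ)$ in the role of the base loop (so that $\mnuc{Q,\circ} = \rnuc{Q,\cdot}$, again appealing to Lemma \ref{Lem:rder2}).

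Finally, Lemma \ref{Lem:same} guarantees that normal subloops of $(Q,\circ)$ are precisely the normal subloops of $(Q,\cdot)$, so the conclusion $\lrnuc{Q,\cdot}\trianglelefteq (Q,\cdot)$ drops out immediately. There is really no obstacle here beyond verifying the bookkeeping: the structural work lives in Theorem \ref{Thm:LMNuc_normal}, and the whole point of {\S}\ref{Sec:isostrophes} was to make this kind of transfer essentially automatic. The only thing one has to be slightly careful about is the direction of Theorem \ref{Thm:iso_nucs}(2), since the middle and right nuclei swap when passing between a loop and its principal right isostrophe, and one must apply the theorem in the correct direction to line up $\mnuc{Q,\circ}$ with $\rnuc{Q,\cdot}$ rather than the other way around.
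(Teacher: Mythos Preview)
Your proposal is correct and follows essentially the same route as the paper's own proof: pass to the principal right isostrophe, invoke Theorem~\ref{Thm:LMNuc_normal} there, identify $\lmnuc{Q,\circ}$ with $\lrnuc{Q,\cdot}$ via Theorem~\ref{Thm:iso_nucs}, and transfer normality back via Lemma~\ref{Lem:same}. Your extra care in tracking the direction of Theorem~\ref{Thm:iso_nucs}(2) (applying it with $(Q,\circ)$ as the base loop so that $\mnuc{Q,\circ}=\rnuc{Q,\cdot}$) is exactly the bookkeeping the paper leaves implicit.
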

\begin{proof}
Since $(Q,\cdot)$ has left and right nuclear squares, Lemma \ref{Lem:rder2} and Theorem \ref{Thm:lm=lr}, the principal right isostrophe $(Q,\circ)$ has left and middle nuclear squares. By Theorem \ref{Thm:LMNuc_normal},
$\lmnuc{Q,\circ}$ is a normal subloop of $(Q,\circ)$. Since $\lrnuc{Q,\cdot} = \lmnuc{Q,\circ}$ by Theorem \ref{Thm:iso_nucs}(2), it follows from Lemma \ref{Lem:same} that $\lrnuc{Q,\cdot}$ is normal in $(Q,\cdot)$.
\end{proof}

\begin{cor}\label{Cor:LMNS_simple}
Let $Q$ be a simple loop with all squares in two nuclei. Then either $Q$ is a group or $Q$ is a nonassociative loop of exponent two.
\end{cor}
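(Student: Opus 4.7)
The plan is to package the two normality theorems just proved together with the hypothesis that $Q$ is simple. The assumption that $Q$ has squares in two nuclei splits into three cases according to which pair of nuclei is involved. The middle-and-right case is the mirror of the left-and-middle case (by passing to the opposite loop), so Theorem \ref{Thm:LMNuc_normal} handles both. The remaining case is covered by Theorem \ref{Thm:LRNuc_normal}. In each case, the pairwise intersection $N$ of the two nuclei in question is a normal subloop of $Q$.

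Next I would invoke simplicity: either $N = \{e\}$ or $N = Q$. If $N = Q$, then $Q$ coincides with one of its nuclei; for instance, if $N = \lmnuc{Q}$ then $Q = \lnuc{Q}$, which by the very definition of left nucleus forces $a \cdot xy = ax \cdot y$ for all $a,x,y \in Q$, so $Q$ is a group. The same reasoning applies in the other two cases (in the left-and-right case, either $Q = \lnuc{Q}$ or $Q = \rnuc{Q}$ suffices for associativity). If instead $N = \{e\}$, then by hypothesis every square lies in $N$, so $x^2 = e$ for all $x \in Q$, i.e., $Q$ has exponent two. An exponent-two loop that also happens to be associative is a group, so the dichotomy claimed in the corollary is recovered: either $Q$ is a group, or $Q$ is a nonassociative loop of exponent two.

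There is no real obstacle here; the content of the corollary has already been extracted in the two normality theorems, and this proof is essentially a bookkeeping exercise. The only point worth stating carefully is that ``$Q$ equals one of its nuclei'' really does force $Q$ to be a group, which is immediate from the definitions of the three nuclei.
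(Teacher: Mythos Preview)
Your proof is correct and follows essentially the same approach as the paper: invoke Theorem \ref{Thm:LMNuc_normal}, its dual, or Theorem \ref{Thm:LRNuc_normal} (as appropriate) to get normality of the intersection $N$ of the two nuclei, then use simplicity to force $N=Q$ (group) or $N=\{e\}$ (exponent two). The only difference is that you spell out explicitly why $N=Q$ forces associativity and why $N=\{e\}$ forces exponent two, whereas the paper leaves these as implicit.
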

\begin{proof}
Let $N$ denote the intersection of the corresponding nuclei. By Theorem \ref{Thm:LMNuc_normal}, its dual, or Theorem \ref{Thm:LRNuc_normal}, whichever is appropriate, $N$ is a normal subloop of $Q$. By simplicity, either $N = Q$ or $N = \{e\}$. These are precisely the two cases in the statement.
\end{proof}

\begin{cor}\label{Cor:nuc_sq}
Let $Q$ be a loop with nuclear squares. Then $\nuc{Q}$ is a normal subloop of $Q$.
\end{cor}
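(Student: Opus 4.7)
The plan is to obtain $\nuc{Q}$ as the intersection of two of the ``two-nuclei'' subloops that have already been shown to be normal, and then invoke the standard fact that an intersection of normal subloops is normal.

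First I would observe that the hypothesis ``$Q$ has nuclear squares'' is stronger than needed: since $x^2 \in \nuc{Q} = \lnuc{Q}\cap\mnuc{Q}\cap\rnuc{Q}$ for all $x$, the loop $Q$ simultaneously has left and middle nuclear squares, left and right nuclear squares, and middle and right nuclear squares. Consequently Theorem \ref{Thm:LMNuc_normal}, Theorem \ref{Thm:LRNuc_normal}, and the dual of Theorem \ref{Thm:LMNuc_normal} all apply, so each of $\lmnuc{Q}$, $\lrnuc{Q}$, and $\rmnuc{Q}$ is a normal subloop of $Q$. Only two of these are actually needed for the argument.

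Next I would write $\nuc{Q} = \lmnuc{Q}\cap\rnuc{Q}$ and, more usefully for this argument, $\nuc{Q} = \lmnuc{Q}\cap\lrnuc{Q}$, since both factors on the right are already known to be normal. The only remaining ingredient is that the intersection of two normal subloops is normal. This is standard, but I would justify it briefly using the characterization recalled in {\S}\ref{Sec:basics}: a subloop is normal iff it is invariant under $\inn{Q}$. Both $\lmnuc{Q}$ and $\lrnuc{Q}$ are subloops (as intersections of subloops), and the intersection of two $\inn{Q}$-invariant subsets is $\inn{Q}$-invariant, so $\lmnuc{Q}\cap\lrnuc{Q}$ is a normal subloop.

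There is no real obstacle here; the work is entirely front-loaded into the preceding theorems on two-nuclear-square varieties. The only small point to watch is that one must use two of the three pairwise intersections (not simply the pair $\lnuc{Q}\cap\mnuc{Q}$ and $\rnuc{Q}$, since $\rnuc{Q}$ itself is not known to be normal), and one must cite the correct triple of results: Theorem \ref{Thm:LMNuc_normal} together with Theorem \ref{Thm:LRNuc_normal} suffices, so the dual of Theorem \ref{Thm:LMNuc_normal} is not strictly needed.
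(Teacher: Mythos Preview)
Your proposal is correct and follows essentially the same approach as the paper: the paper's proof is simply the one-line assertion that the result ``follows immediately from Theorems \ref{Thm:LMNuc_normal} and \ref{Thm:LRNuc_normal},'' and you have spelled out exactly the implicit reasoning behind that line, including the observation that $\nuc{Q}=\lmnuc{Q}\cap\lrnuc{Q}$ and that intersections of normal subloops are normal.
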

\begin{proof}
This follows immediately from Theorems \ref{Thm:LMNuc_normal} and \ref{Thm:LRNuc_normal}.
\end{proof}

\section{Loops with central squares}
\label{Sec:AIP}

In this section we specialize from loops with nuclear squares to loops with central squares, that is, loops with both nuclear and commuting squares. We will then specialize further to consider loops in which the squaring map $x\mapsto x^2$ is a centralizing endomorphism.

A loop $Q$ is said to be \emph{power-associative} if, for each $x\in Q$, the subloop $\langle x\rangle$ is a group. Informally, power-associativity means that powers of elements are defined unambiguously. For now, we fix a convention for powers, say, $x^n := L_x^n(1)$ for every integer $n$. Power-associativity is then equivalent to $x^m\cdot x^n = x^{m+n}$ for all $m,n\in \mathbb{Z}$ and all $x\in Q$.

\begin{lem}\label{Lem:CS_PA}
Every loop with central squares is power-associative.
\end{lem}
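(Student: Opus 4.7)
The plan is to exploit the fact that $x^2\in Z(Q)$ lies in all three nuclei and commutes with every element of $Q$. In particular $Z(Q)$ is a commutative associative subloop, hence an abelian group, so every integer power $(x^2)^k$ is unambiguously defined and central. I will identify $\langle x\rangle$ with the set
\[
H \coloneq \{(x^2)^k \mid k\in\mathbb{Z}\}\,\cup\,\{x\cdot(x^2)^k \mid k\in\mathbb{Z}\}
\]
and verify that $H$ is a group under the multiplication of $Q$.

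As a preliminary step I would show that $x$ has a two-sided inverse. Lemma~\ref{Lem:sq_ids} with $a=x$ (using $x^2\in\lnuc{Q}$) gives $x^2 x\linv = x$, and its dual (using $x^2\in\rnuc{Q}$) gives $x\rinv x^2 = x$. Multiplying on the appropriate side by the central element $(x^2)\inv$ and invoking nuclearity of $(x^2)\inv$ yields $x\linv = (x^2)\inv\cdot x = x\cdot(x^2)\inv = x\rinv$; denote this common value $x\inv$. With the convention $x^n \coloneq L_x^n(e)$, which is meaningful for every integer $n$ because $L_x$ is a bijection with $L_x\inv(e) = x\rinv = x\inv$, a short induction based only on $x^2\in\lnuc{Q}$ and the formula $x\inv = x\cdot(x^2)\inv$ shows that $x^{2k} = (x^2)^k$ and $x^{2k+1} = x\cdot (x^2)^k$ for every $k\in\mathbb{Z}$. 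Hence every power of $x$ lies in $H$, so $\langle x\rangle\subseteq H$.

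To finish and conclude $\langle x\rangle = H$ is a group, I would check the four possible product types in $H$, namely $(x^2)^j(x^2)^k$, $(x^2)^j\cdot x(x^2)^k$, $x(x^2)^j\cdot(x^2)^k$, and $x(x^2)^j\cdot x(x^2)^k$. In each case, centrality (hence membership in all three nuclei) of $(x^2)^j$ and $(x^2)^k$ allows me to collect the central factors on one side, reducing the product to an element of $Z(Q)$ or of $x\cdot Z(Q)$, both of which are in $H$; the same centrality argument gives associativity of any triple from $H$, and $H$ obviously contains $e$ and the inverses of its elements. The only real obstacle is careful bookkeeping of left versus right multiplications by central factors, but each single rearrangement is immediate from centrality of $x^2$. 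Once $H$ is seen to be a group, $\langle x\rangle = H$ is a group and $Q$ is power-associative.
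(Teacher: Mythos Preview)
Your proposal is correct and follows essentially the same route as the paper. Both arguments first establish, via induction using only centrality of $x^2$, that $x^{2k}=(x^2)^k$ (and hence $x^{2k+1}=x\cdot(x^2)^k$) for every integer $k$, and then finish with a parity case analysis; your ``four product types'' in $H$ is exactly the paper's decomposition $m=2k+i$, $n=2\ell+j$ with $i,j\in\{0,1\}$. The only noticeable difference is organizational: you explicitly verify $x\linv=x\rinv$ up front and package the conclusion as ``$H$ is a group'', whereas the paper works directly toward $x^m x^n=x^{m+n}$ without isolating the two-sided inverse.
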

\begin{proof}
Let $x\in Q$. Since $x^2\in Z(Q)$, note that $(x^2)^k\in Z(Q)$ for every $k\in \mathbb{Z}(Q)$. So we first show 
\begin{equation}\label{Eq:PA-tmp1}
	x^{2k} = (x^2)^k\in Z(Q)
\end{equation}
for each $k\in \mathbb{Z}$. This is clear for $k=0$. If \eqref{Eq:PA-tmp1} holds
for some $k\geq 0$, then
\[
x^{2(k+1)} = L_x^{2k+2}(1) = L_x^{2k}L_x^2(1) = L_x^{2k}(x^2) = L_x^{2k}(1)\cdot x^2 = x^{2k}x^2 = (x^2)^{k+1},
\]
using centrality of $x^2$ in the fourth equality and the inductive hypothesis in the fifth. Next,
\[
x^{-2k}x^{2k} = x^{2k} x^{-2k} = x^{2k}\cdot L_x^{-2k}(1) = L_x^{-2k}(x^{2k}) = 1\,,
\]
using $x^{2k}\in Z(Q)$ in the second equality. Thus $x^{-2k} = (x^{2k})^{-1}
= ((x^2)^k)^{-1} = (x^2)^{-k}$. This establishes \eqref{Eq:PA-tmp1} for all $k\in \mathbb{Z}$.

From \eqref{Eq:PA-tmp1}, we immediately get 
\begin{equation}\label{Eq:PA-tmp2}
	x^{2k}x^{2\ell} = x^{2(k+\ell)}
\end{equation}
for all $k,\ell\in \mathbb{Z}$. 

Now we prove $x^m x^n = x^{m+n}$ for all $m,n\in \mathbb{Z}$. 
We have $m=2k+i$, $n=2\ell+j$ for some $k,\ell\in \mathbb{Z}$, $i,j\in \{0,1\}$.
Then by \eqref{Eq:PA-tmp2},
\[
x^m x^n = L_x^{2k+i}(1)\cdot L_x^{2\ell+j}(1) 
= L_x^i(x^{2k})\cdot L_x^j(x^{2\ell}) = L_x^i(1)\cdot L_x^j(1)\cdot x^{2k}x^{2\ell} = x^i x^j \cdot x^{2(k+\ell)}\,.
\]
If $i=j=1$, then $x^m x^n = x^2 x^{2(k+\ell)} = x^{2(k+\ell+1)} = x^{m+n}$
by \eqref{Eq:PA-tmp2}. Otherwise, $x^m x^n = L_x^{i+j}x^{2(k+\ell)} = x^{2(k+\ell)+i+j} = x^{m+n}$. This completes the proof.
\end{proof}

\begin{lem}\label{Lem:CS_ident}
Let $Q$ be a loop with central squares. For all $x,y\in Q$,
\begin{equation}\label{AIP_SqEndo_tmp1}
	x^2 y^2 = xy\cdot (x\inv y\inv)\inv\,.
\end{equation}
\end{lem}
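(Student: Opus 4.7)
The plan is to set $u := x\inv y\inv$ and reduce the target identity to $xy \cdot u\inv = x^2 y^2$. I will work in two steps, using throughout that every square lies in $Z(Q)$ and hence in all three nuclei.

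First I would prove the auxiliary identity $x^2 y^2 \cdot u = xy$. Starting from $(x^2 y^2)(x\inv y\inv)$, a sequence of regroupings justified in order by left nuclearity of $x^2$, left nuclearity of $y^2$, centrality (hence commutativity with $x\inv$) of $y^2$, middle nuclearity of $y^2$, and two collapses $y^2 y\inv = y$ and $x^2 x\inv = x$ via the power-associativity of Lemma \ref{Lem:CS_PA}, reduces the expression cleanly to $xy$. Each pivot in this chain is a square, so no nuclearity is required of $x\inv$ or $y\inv$.

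Second, power-associativity from Lemma \ref{Lem:CS_PA} gives $u\inv = u \cdot u^{-2}$, where $u^{-2} = (u^2)\inv$ is central. Right nuclearity of $u^{-2}$ then yields $xy \cdot u\inv = (xy \cdot u) \cdot u^{-2}$. Substituting $xy = x^2 y^2 \cdot u$ from the first step and applying left nuclearity of the central element $x^2 y^2$ twice,
\[
xy \cdot u\inv = \bigl((x^2 y^2 \cdot u)\cdot u\bigr)\cdot u^{-2} = (x^2 y^2 \cdot u^2)\cdot u^{-2} = x^2 y^2 \cdot (u^2 u^{-2}) = x^2 y^2.
\]

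The only conceptual subtlety is that $u$ itself need not lie in any nucleus, which is why a direct attempt to manipulate $xy \cdot u\inv$ via some alternativity of $u$ will not work. Lemma \ref{Lem:CS_PA} circumvents this by letting me factor $u\inv$ through the central element $u^{-2}$, after which every step is routinely licensed by the nuclearity of squares. The rest is bookkeeping.
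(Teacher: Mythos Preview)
Your proof is correct. Both your argument and the paper's rely on Lemma~\ref{Lem:CS_PA} and the centrality of squares, but the manipulations differ. The paper works in a single chain
\[
x^2 y^2 = (x^2 y^2 \cdot x\inv y)\cdot (x\inv y)\inv = (x^2 x\inv y)\cdot (x\inv y\, y^{-2})\inv = xy\cdot (x\inv y\inv)\inv,
\]
pivoting on the auxiliary element $x\inv y$ rather than $u = x\inv y\inv$: it multiplies and divides by $x\inv y$, then shifts a factor of $y^2$ from the first term into the second (where it becomes the $y^{-2}$ inside the inverse). You instead first establish $x^2 y^2\cdot u = xy$ by direct reduction, and then handle the non-nuclear $u\inv$ by factoring it as $u\cdot u^{-2}$ with $u^{-2}$ central. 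Your route makes the obstacle (that $u$ need not be nuclear) and its resolution explicit; the paper's is more compressed but asks the reader to see why $x\inv y$ is the right element to insert. One small omission in your Step~1 bookkeeping: after reaching $x^2\cdot(x\inv\cdot y)$ you need one more application of left nuclearity of $x^2$ before the collapse $x^2 x\inv = x$, but this is clearly what you intend.
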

\begin{proof}
By Lemma \ref{Lem:CS_PA}, $Q$ is power-associative. Using this and the centrality of squares, we have
\[
x^2 y^2 = x^2 y^2\cdot x\inv y\cdot (x\inv y)\inv = x^2 x\inv y\cdot (x\inv y y^{-2})\inv = xy\cdot (x\inv y\inv)\inv\,.
\]
\end{proof}

\begin{lem}\label{Lem:CS_AIP_SqEnd}
Let $Q$ be a loop with central squares. Then $Q$ has the automorphic inverse property if and only if the squaring map $s\colon Q\to Q;x\mapsto x^2$ is an endomorphism.
\end{lem}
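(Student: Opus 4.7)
The plan is to use identity \eqref{AIP_SqEndo_tmp1} from Lemma \ref{Lem:CS_ident} as the central tool in both directions. Because central squares imply power-associativity (Lemma \ref{Lem:CS_PA}), each $x\in Q$ has a well-defined two-sided inverse $x\inv$ (equal to $x\linv$ and to $x\rinv$), and $(xy)^2$ unambiguously denotes $xy\cdot xy$. So the AIP in this setting reads simply as $(xy)\inv = x\inv y\inv$, and the endomorphism condition reads simply as $(xy)^2 = x^2 y^2$.

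For the forward implication, I would assume $s$ is an endomorphism. Then $(xy)^2 = x^2 y^2$, and substituting into \eqref{AIP_SqEndo_tmp1} gives $xy\cdot xy = xy\cdot (x\inv y\inv)\inv$. Left-cancelling $xy$ yields $xy = (x\inv y\inv)\inv$, i.e., $(xy)\inv = x\inv y\inv$, which is the AIP.

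For the converse, assume $Q$ has the AIP. Applying the AIP to the pair $(x\inv, y\inv)$ gives $(x\inv y\inv)\inv = (x\inv)\inv (y\inv)\inv = xy$, and substituting into \eqref{AIP_SqEndo_tmp1} gives $x^2 y^2 = xy\cdot xy = (xy)^2$, so $s$ is an endomorphism.

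I do not expect any real obstacle. The only subtlety is making sure two-sided inverses are available and that $(xy)^2$ equals $xy\cdot xy$, both of which are supplied by power-associativity (Lemma \ref{Lem:CS_PA}). Once Lemma \ref{Lem:CS_ident} is in place, each direction is a single line of manipulation built around the identity \eqref{AIP_SqEndo_tmp1}.
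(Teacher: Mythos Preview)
Your argument is correct and is essentially the same as the paper's: both directions hinge on Lemma~\ref{Lem:CS_ident}, with one direction using the AIP to simplify the right-hand side of \eqref{AIP_SqEndo_tmp1} to $(xy)^2$, and the other using endomorphic squaring to replace the left-hand side by $(xy)^2$ and then cancelling. The only cosmetic slip is that you label the direction ``$s$ endomorphism $\Rightarrow$ AIP'' as the forward implication, whereas the statement as written has AIP first; the mathematics is unaffected.
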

\begin{proof}
If the AIP holds, then the right hand side of \eqref{AIP_SqEndo_tmp1} equals $(xy)^2$ and thus $s$ is an endomorphism. Conversely, if $s$ is an endomorphism, then the left hand side \eqref{AIP_SqEndo_tmp1} equals $(xy)^2$; cancelling $xy$ on the left gives $xy = (x\inv y\inv)\inv$, which is the AIP.
\end{proof}

\begin{thm}\label{Thm:AIP_SqEndo}
Let $Q$ be a loop with squares in two nuclei. The following are equivalent:
\begin{enumerate}
\item $Q$ has the automorphic inverse property;
\item The squaring map $s:Q \to Q; x\mapsto  x^2$ is an endomorphism.
\end{enumerate}
When these conditions hold, $Q$ has central squares.
\end{thm}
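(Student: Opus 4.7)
The plan is to prove $(1) \Leftrightarrow (2)$ by showing that each condition separately forces the squares to be central; with central squares in hand, Lemma~\ref{Lem:CS_AIP_SqEnd} bridges the two properties.

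For the direction $(1) \Rightarrow (2)$ together with central squares, I would invoke Lemma~\ref{Lem:lmrnuc}(4): under the AIP every pairwise intersection of nuclei coincides with $Z(Q)$. Since by hypothesis $x^2$ lies in two of the three nuclei for each $x$, we obtain $x^2 \in Z(Q)$ and hence central squares. Lemma~\ref{Lem:CS_AIP_SqEnd} then produces the endomorphism property of $s$.

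For the direction $(2) \Rightarrow (1)$ together with central squares, I would first reduce via duality (passing to the opposite loop) or via the principal-isostrophe correspondence of Theorem~\ref{Thm:lm=lr} combined with Theorem~\ref{Thm:iso_nucs} to the case that squares lie in $\lnuc(Q) \cap \mnuc(Q)$. Several preparatory facts then follow quickly. Lemma~\ref{Lem:sq_ids} gives $x^2 x^\ell = x$; left nuclearity of $x^2$ applied to $x^2 (x^r x) = x^2 \cdot e$ gives $x^2 x^r = x$; cancellation yields $x^\ell = x^r =: x^{-1}$, so $Q$ has two-sided inverses. The endomorphism property applied to $x \cdot x^{-1} = e$ then gives $(x^{-1})^2 = (x^2)^{-1}$. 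With two-sided inverses available, Lemma~\ref{Lem:sq_ids2} specializes to the identities $L_x L_{x^{-1}} = L_{x^{-1}} L_x$ and $L_x L_{x^{-1}}^{-1} = L_{x^2}$, which I expect to be the principal technical tools. Next, applying Lemma~\ref{Lem:sq_ids} to the element $xy$ (using that $(xy)^2 = x^2 y^2$ is an element of $\lmnuc(Q)$) and unwinding with left nuclearity of $x^2$ and $y^2$ yields the formula $(xy)^{-1} = (y^{-2} x^{-1}) y$; substituting this into $(xy)^{-1}(xy) = e$ and applying left nuclearity of $y^{-2}$ produces the identity $(x^{-1} y)(xy) = y^2$. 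The remaining job is to combine these identities with further applications of the endomorphism $s$ to composite expressions so as to conclude $x^2 y = y x^2$ for all $x, y$, giving $x^2 \in C(Q)$ and hence, together with $x^2 \in \lmnuc(Q)$, central squares. Lemma~\ref{Lem:CS_AIP_SqEnd} then delivers the AIP.

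The main obstacle is this last step: promoting the commutativity of squares with other squares, which is an immediate consequence of applying $s$ to both sides of $(xy)^{-1} = (y^{-2} x^{-1}) y$ and comparing in the subgroup $\lmnuc(Q)$, to full centrality of squares in $Q$. I anticipate that success requires exploiting the commutation $L_x L_{x^{-1}} = L_{x^{-1}} L_x$ against the identity $(x^{-1} y)(xy) = y^2$ and its variants (for example, substituting $x \mapsto x^{-1}$ to get $(xy)(x^{-1} y) = y^2$, so that $xy$ and $x^{-1} y$ commute), and then bootstrapping these commutations through the endomorphism property on suitably chosen products.
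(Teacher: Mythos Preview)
Your direction $(1)\Rightarrow(2)$ matches the paper exactly. The direction $(2)\Rightarrow(1)$, however, has two real gaps.

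First, the reduction step is not sound as stated. Passing to the opposite loop does convert the middle-right case to the left-middle case while preserving endomorphic squaring (since $x\bullet x = x\cdot x$). But for the left-right case, the principal right isostrophe $(Q,\circ)$ has $x\circ x = ((x\rinv)^2)\inv$, which is a different map from $x\mapsto x^2$; there is no reason the endomorphism property of $s$ in $(Q,\cdot)$ should translate into the endomorphism property of squaring in $(Q,\circ)$. Theorems~\ref{Thm:lm=lr} and~\ref{Thm:iso_nucs} only transfer the nuclear-square hypotheses, not the endomorphic-squaring hypothesis, so the left-right case cannot be absorbed into the left-middle case this way.

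Second, even in the left-middle case you explicitly leave the decisive step---promoting commutation of squares with squares to commutation of squares with everything---unresolved, and the bootstrapping you sketch does not obviously close. The paper avoids all of this machinery with a single direct computation: for $a\in\lmnuc{Q}$ one checks
\[
a(ax\cdot x)=a^2x\cdot x=a^2x^2=(ax)(ax)=a(x\cdot ax)=a(xa\cdot x),
\]
using $a\in\lnuc{Q}$, $a^2\in\lnuc{Q}$, endomorphic squaring, $a\in\lnuc{Q}$, and $a\in\mnuc{Q}$, respectively; cancelling gives $ax=xa$. The left-right case is handled by an equally short direct calculation (first $ax^2=x^2a$, then a similar chain). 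No reduction between cases, no inverse formulas, no Lemma~\ref{Lem:sq_ids2} are needed. Your preparatory identities are correct but lead you away from the short path.
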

\begin{proof}
By Lemma \ref{Lem:CS_AIP_SqEnd}, it is sufficient to prove that each of (1) and (2) imply that squares are central.

Assume (1). By Lemma \ref{Lem:lmrnuc}(4), the pairwise intersections of the nuclei all coincide with the center. Since squares are contained in two nuclei, it follows that squares are central.

Assume (2). There are three cases to consider, depending upon which pairs of nuclei contain all squares.

First assume $Q$ has left and middle nuclear squares. We will prove that $\lmnuc{Q}\subseteq C(Q)$, which implies $\lmnuc{Q} = Z(Q)$, and so $Q$ will have central squares. Let $a\in \lmnuc{Q}$. For all $x\in Q$,
\[
a(ax\cdot x) = a^2 x\cdot x = a^2 x^2 = ax\cdot ax = a(x\cdot ax) = a(xa\cdot x)\,,
\]
using $a\in \lnuc{Q}$ in the first and fourth equality, $a^2\in \lnuc{Q}$ in the second, endomorphic squaring in the third, and $a\in \mnuc{Q}$ in the fifth. Cancelling $a$ on the left and then $x$ on the right, we obtain $ax\cdot xa$ for all $x\in Q$, that is, $a\in C(Q)$. This completes the proof of this case. 

The case where $Q$ has middle and right nuclear squares is dual to the preceding case, hence omitted.

Finally, assume $Q$ has left and right nuclear squares. We will prove that $\lrnuc{Q}\subseteq C(Q)$, which implies $\lrnuc{Q} = Z(Q)$, and so $Q$ will have central squares. Let $a\in \lrnuc{Q}$. For all $x\in Q$,
\[
a\cdot x^2 a \cdot x^2 = ax^2\cdot ax^2 = a^2\cdot x^2 x^2 = a\cdot ax^2\cdot x^2\,,
\]
using both $a\in \lnuc{Q}$ and $x^2\in \rnuc{Q}$ in the first and third equalities. Cancelling, we have
\begin{equation}\label{Eqn:AIP_SqEndo-tmp1}
ax^2 = x^2 a
\end{equation}
for all $x\in Q$. Now,
\[
a(x\cdot ax) = ax\cdot ax = a^2 x^2 = a\cdot ax^2 = a\cdot x^2a = a(x\cdot xa)\,,
\]
using $a\in \lnuc{Q}$ in the first and third equality, endomorphic squaring in the second, \ref{Eqn:AIP_SqEndo-tmp1} in the fourth, and $a\in \rnuc{Q}$ in the fifth. Cancelling $a$ and then $x$ on the left, we get $ax=xa$ for all $x\in Q$, that is, $a\in C(Q)$. This completes the proof of this case, hence the proof of the theorem.
\end{proof}

Since the assumptions of AIP and/or endomorphic squaring might seem rather strong, one might wonder whether Theorem \ref{Thm:AIP_SqEndo} can be improved by assuming that squares lie in just one nucleus. The following examples, all found using \textsc{Mace4}, show that the hypotheses of the theorem are reasonably close to optimal. There are a few unresolved cases we leave as open problems. 

\begin{exm}
Here is a left nuclear square loop with the AIP, but without endomorphic squaring. Here $4^2\cdot 2^2 = 1\cdot 3 = 3$ but $(4\cdot 2)^2 = 6^2 = 2$. Note that in this example, $3^2\not\in C(Q)$ because $3^2\cdot 4 = 2\cdot 4 = 5 \neq 6 = 4\cdot 2 = 4\cdot 3^2$.
\[
\begin{array}{c|cccccc}
\cdot & 1 & 2 & 3 & 4 & 5 & 6 \\
\hline
1     & 1 & 2 & 3 & 4 & 5 & 6 \\
2     & 2 & 3 & 1 & 5 & 6 & 4 \\
3     & 3 & 1 & 2 & 6 & 4 & 5 \\
4     & 4 & 6 & 5 & 1 & 2 & 3 \\
5     & 5 & 4 & 6 & 2 & 3 & 1 \\
6     & 6 & 5 & 4 & 3 & 1 & 2
\end{array}
\]
\end{exm}

\begin{prb}
	Does there exist an AIP loop with left nuclear and commuting squares but without endomorphic squaring?
\end{prb}

\begin{exm}
Here is a middle nuclear square loop with the AIP (hence, by Lemma \ref{Lem:lmrnuc}(4), with commuting squares as well), but without endomorphic squaring. Here $2^2\cdot 4^2 = 3\cdot 1 = 3$, but $(2\cdot 4)^2 = 5^2 = 2$.
\[
\begin{array}{c|cccccc}
	\cdot & 1 & 2 & 3 & 4 & 5 & 6 \\
	\hline
	1     & 1 & 2 & 3 & 4 & 5 & 6 \\
	2     & 2 & 3 & 1 & 5 & 6 & 4 \\
	3     & 3 & 1 & 2 & 6 & 4 & 5 \\
	4     & 4 & 5 & 6 & 1 & 3 & 2 \\
	5     & 5 & 6 & 4 & 3 & 2 & 1 \\
	6     & 6 & 4 & 5 & 2 & 1 & 3
\end{array}
\]
\end{exm}

\begin{prb}
Does there exist a left nuclear square loop with endomorphic squaring but not satisfying the AIP?
\end{prb}

\begin{exm}
Here is a loop with middle nuclear and commuting squares and with endomorphic squaring, but without the AIP. Here $(3\cdot 5)\ldv 1 = 7\ldv 1 = 5$, but $(3\ldv 1)(5\ldv 1) = 3\cdot 8 = 6$.
\[
\begin{array}{c|cccccccc}
	\cdot & 1 & 2 & 3 & 4 & 5 & 6 & 7 & 8\\
	\hline
	1     & 1 & 2 & 3 & 4 & 5 & 6 & 7 & 8 \\
	2     & 2 & 1 & 4 & 3 & 6 & 5 & 8 & 7 \\
	3     & 3 & 4 & 1 & 2 & 7 & 8 & 5 & 6 \\
	4     & 4 & 3 & 2 & 1 & 8 & 7 & 6 & 5 \\
	5     & 5 & 6 & 7 & 8 & 2 & 3 & 4 & 1 \\
	6     & 6 & 5 & 8 & 7 & 3 & 2 & 1 & 4 \\
	7     & 7 & 8 & 5 & 6 & 1 & 4 & 2 & 3 \\
	8     & 8 & 7 & 6 & 5 & 4 & 1 & 3 & 2
\end{array}
\]
\end{exm}

\begin{exm}
Here is a loop with left nuclear squares, endomorphic squaring and the AIP, but without commuting squares. Here $3^3\cdot 3 = 2\cdot 3 = 4 \neq 5 =
3\cdot 2 = 3\cdot 3^2$.
\[
\begin{array}{c|cccccccc}
	\cdot & 1 & 2 & 3 & 4 & 5 & 6 & 7 & 8\\
	\hline
	1 & 1 & 2 & 3 & 4 & 5 & 6 & 7 & 8\\
	2 & 2 & 1 & 4 & 3 & 6 & 5 & 8 & 7\\
	3 & 3 & 5 & 2 & 1 & 7 & 8 & 4 & 6\\
	4 & 4 & 6 & 1 & 2 & 8 & 7 & 3 & 5\\
	5 & 5 & 3 & 7 & 8 & 2 & 1 & 6 & 4\\
	6 & 6 & 4 & 8 & 7 & 1 & 2 & 5 & 3\\
	7 & 7 & 8 & 5 & 6 & 3 & 4 & 1 & 2\\
	8 & 8 & 7 & 6 & 5 & 4 & 3 & 2 & 1
\end{array}
\]
\end{exm}

\begin{prb}
Does there exist a loop with left nuclear and commuting squares, endomorphic squaring and the AIP, but without middle nuclear squares?
\end{prb}

\begin{exm}
Here is a loop with middle nuclear squares, endomorphic squaring and the AIP 
(hence, by Lemma \ref{Lem:lmrnuc}(4), with commuting squares as well) but without left nuclear squares. Here $(3^2\cdot 3)\cdot 3 = (2\cdot 3)\cdot 3 = 4\cdot 3 = 8 \neq 1 = 2\cdot 2 = 3^2\cdot (3\cdot 3)$.
\[
\begin{array}{c|cccccccc}
	\cdot & 1 & 2 & 3 & 4 & 5 & 6 & 7 & 8\\
	\hline
	1 & 1 & 2 & 3 & 4 & 5 & 6 & 7 & 8\\
	2 & 2 & 1 & 4 & 3 & 6 & 5 & 8 & 7\\
	3 & 3 & 4 & 2 & 8 & 1 & 7 & 5 & 6\\
	4 & 4 & 3 & 8 & 2 & 7 & 1 & 6 & 5\\
	5 & 5 & 6 & 1 & 7 & 2 & 8 & 3 & 4\\
	6 & 6 & 5 & 7 & 1 & 8 & 2 & 4 & 3\\
	7 & 7 & 8 & 5 & 6 & 3 & 4 & 1 & 2\\
	8 & 8 & 7 & 6 & 5 & 4 & 3 & 2 & 1
\end{array}
\]
\end{exm}

\section{Decomposition Theorem}
\label{Sec:decomp}

In this section, let $Q$ be a loop in which squaring is a centralizing endomorphism, that is, $Q$ has central squares and the squaring map $s\colon Q\to Q;x\mapsto x^2$ takes its values in $Z(Q)$. We will freely use relevant results of the previous section.

For each nonnegative integer $n$, set
\[
E_n\coloneqq \{ a\in Q \mid a^{2^n} = e \}\quad\text{and}\quad E\coloneqq \bigcup_{n\geq 0} E_n\,. \tag{E}\label{Eqn:E}
\]
Note that each $E_n$ is the kernel of the iterated endomorphism $s^n$. This immediately implies the first two parts of the following.

\begin{lem}\label{Lem:E}
{\ }
\begin{enumerate}
\item Each $E_n$ is a normal subloop of $Q$;
\item $E$ is a normal subloop of $Q$;
\item $Q/E_1$ is an abelian group.
\end{enumerate}
\end{lem}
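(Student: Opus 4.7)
The plan is to view all three statements through the lens of the squaring endomorphism. Since $s\colon Q \to Q;x\mapsto x^2$ is an endomorphism and $Q$ is power-associative (Lemma \ref{Lem:CS_PA}), each iterate $s^n$ is again an endomorphism of $Q$ whose kernel is precisely $E_n$; moreover $E = \bigcup_{n\geq 0}\ker(s^n)$.

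For part (1), the kernel of any loop homomorphism is a normal subloop, so $E_n$ is immediately normal in $Q$. For part (2), I would first record the ascending chain $E_0 \subseteq E_1 \subseteq E_2 \subseteq \cdots$: if $a^{2^n}=e$, then by power-associativity $a^{2^{n+1}} = (a^{2^n})^2 = e^2 = e$, so $a \in E_{n+1}$. Given $a,b\in E$, we may pick $N$ with $a,b\in E_N$; since $E_N$ is a subloop, $ab$, $a\ldv b$, and $a\rdv b$ all lie in $E_N\subseteq E$, showing $E$ is a subloop. For normality of $E$, every $\phi\in\inn{Q}$ preserves each $E_n$ by (1), hence $\phi(E) = \bigcup_n \phi(E_n) \subseteq \bigcup_n E_n = E$, so $E$ is invariant under $\inn{Q}$ and therefore normal.

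For part (3), the first isomorphism theorem for loops applied to the endomorphism $s$ yields $Q/E_1 \cong s(Q)$. Because $s$ is centralizing, $s(Q)\subseteq Z(Q)$. Now $Z(Q)$ is an abelian group (it is contained in the nucleus and in the commutant), and any subloop of an abelian group is itself an abelian group, so $s(Q)$ is an abelian group, and hence so is $Q/E_1$.

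The whole statement is essentially bookkeeping; the only points requiring even a moment's thought are invoking power-associativity to justify the chain $E_n\subseteq E_{n+1}$, noting that an ascending union of normal subloops is a normal subloop, and recognizing that the image of the centralizing endomorphism $s$ lands in the abelian group $Z(Q)$.
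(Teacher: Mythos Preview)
Your proof is correct. Parts (1) and (2) are handled exactly as the paper does---the paper simply remarks, just before the lemma, that each $E_n$ is the kernel of the iterated endomorphism $s^n$ and that this ``immediately implies'' (1) and (2); you have merely spelled out the routine details (ascending chain, union of normal subloops), which is fine.

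Part (3) is where you diverge. The paper argues elementwise: for any $x,y,z\in Q$ the associator satisfies $[x,y,z]^2 = [x^2,y^2,z^2] = e$ because $s$ is an endomorphism and squares are central, so every associator lies in $E_1$ and $Q/E_1$ is a group; then the AIP (inherited by the quotient) forces that group to be abelian. Your route is more structural: the first isomorphism theorem gives $Q/E_1 \cong s(Q)$, and the \emph{centralizing} hypothesis says $s(Q)\subseteq Z(Q)$, which is already an abelian group. Your argument is shorter and uses the centralizing hypothesis in one stroke, whereas the paper's argument separates the associativity and commutativity pieces and makes explicit that $E_1$ contains all associators (a fact that may be independently useful). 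Both are entirely valid.
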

\begin{proof}
For (3), consider the associator $[x,y,z] = (x\cdot yz)\ldv (xy\cdot z)$ for each $x,y,z\in Q$. Since $s$ is an endomorphism
and squares are central, $[x,y,z]^2 = [x^2,y^2,z^2] = e$. Thus $E_1$ contains every associator and so $Q/E_1$ is a group.
Since $Q/E_1$ also satisfies the AIP, it is an abelian group.
\end{proof}

Note that the assumption that squaring is an endomorphism or, by Theorem \ref{Thm:AIP_SqEndo}, the AIP, is necessary. The dihedral group of order $8$, for instance, has central squares but the elements of order $2$ do not form
a subgroup.

Next, set
\[
O\coloneqq \{ a\in Q\mid a\text{ has finite odd order } \}\,. \tag{O}\label{Eqn:O}
\]
Since every element of $O$ is a square, we have the following.
\begin{lem}\label{Lem:O}
$O$ is a central, hence normal, subloop of $Q$. In particular, $O$ is an abelian group.
\end{lem}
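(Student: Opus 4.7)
The plan is to use the hint that every element of $O$ is a square to place $O$ inside the center $Z(Q)$, then to recognize $O$ as the odd-torsion subgroup of the abelian group $Z(Q)$, and finally to deduce normality from centrality.

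First I would verify $O\subseteq Z(Q)$. If $a\in O$ has odd order $2k+1$, then power-associativity (Lemma \ref{Lem:CS_PA}) gives $a = a\cdot a^{2k+1} = a^{2k+2} = (a^{k+1})^2$. The standing hypothesis that squaring takes values in $Z(Q)$ then forces $a\in Z(Q)$.

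Next I would observe that $Z(Q)$ is a genuine abelian group: it is contained in $C(Q)$ and in $\nuc{Q}$, so the restricted operation is both commutative and associative. Within this abelian group, the subset $O$ of elements of finite odd order is a subgroup by elementary group theory — the order of a product divides the least common multiple of the factor orders (odd times odd is odd), and inversion preserves order. Hence $O$ is a subgroup of $Z(Q)$, and in particular a subloop of $Q$ contained in the center; this gives the ``in particular'' clause for free.

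Finally, for normality I would invoke the characterization from Section \ref{Sec:basics} that $Z(Q)$ is precisely the fixed-point set of $\inn{Q}$. Since $O\subseteq Z(Q)$, every inner mapping fixes $O$ pointwise, so $O$ is certainly invariant under $\inn{Q}$, which is one of the standard characterizations of normality. There is essentially no obstacle here: once the hint's squaring trick is justified by power-associativity, everything reduces to well-known facts about centers and about torsion subgroups of abelian groups.
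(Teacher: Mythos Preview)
Your proposal is correct and follows exactly the paper's approach: the paper's entire argument is the single sentence ``Since every element of $O$ is a square, we have the following,'' and you have simply filled in the routine details (the power-associativity computation $a=(a^{k+1})^2$, the identification of $O$ with the odd-torsion subgroup of the abelian group $Z(Q)$, and the observation that subloops of the center are $\inn{Q}$-fixed hence normal).
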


We now have our main decomposition theorem. A loop is said be \emph{torsion} if each $1$-generated subloop is finite. In the power-associative case, this just means that every element has finite order.

\begin{thm}\label{Thm:ExO}
Let $Q$ be a torsion loop in which squaring is a centralizing endomorphism. Define $E$ and $O$ as in \eqref{Eqn:E} and \eqref{Eqn:O}, respectively. Then $Q\cong E\times O$.
\end{thm}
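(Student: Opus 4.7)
The plan is to show that $Q$ is an internal direct product of $E$ and $O$ by verifying the three standard conditions: $E \cap O = \{e\}$, $EO = Q$, and the multiplication map $\varphi\colon E \times O \to Q,\ (x,y)\mapsto xy$ is an isomorphism. Normality of both factors is already in hand from Lemmas \ref{Lem:E} and \ref{Lem:O}, and we will exploit the fact that $O$ sits inside $Z(Q)$, so elements of $O$ both commute and associate with everything in $Q$.

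For $E \cap O = \{e\}$, take $a \in E \cap O$. Since $Q$ is torsion and power-associative (Lemma \ref{Lem:CS_PA}), $\langle a\rangle$ is a finite cyclic group, and $a \in E$ forces $|a|$ to be a power of $2$ while $a \in O$ forces $|a|$ to be odd; hence $|a|=1$, so $a=e$. For $EO = Q$, given $a \in Q$ of order $k$, factor $k = 2^n m$ with $m$ odd. Bezout gives integers $u,v$ with $u\cdot 2^n + v\cdot m = 1$. Set $x \coloneqq a^{vm}$ and $y \coloneqq a^{u 2^n}$; power-associativity yields $x^{2^n} = a^{vk} = e$ so $x \in E$, and $y^m = a^{uk} = e$ so $y$ has odd order and $y \in O$. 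Then $a = a^1 = a^{vm + u 2^n} = xy$ by power-associativity.

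To see $\varphi$ is a loop homomorphism, note that for $x_1,x_2 \in E$ and $y_1,y_2 \in O$, we have $y_1 \in Z(Q) \subseteq \nuc{Q}$ and $y_1 x_2 = x_2 y_1$, so
\[
(x_1 y_1)(x_2 y_2) = x_1(y_1 \cdot x_2 y_2) = x_1((y_1 x_2) y_2) = x_1((x_2 y_1) y_2) = x_1(x_2 (y_1 y_2)) = (x_1 x_2)(y_1 y_2),
\]
where the bracketing manipulations are all legal because $y_1$ (and later $y_2$) lies in $\nuc{Q}$. Surjectivity is just $EO = Q$. For injectivity, suppose $\varphi(x,y) = xy = e$; then $y$ is the two-sided inverse $x^{-1}$, which lies in $E$ because $E$ is a subloop, so $y \in E \cap O = \{e\}$ and hence $x=e$ as well. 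Thus the kernel of $\varphi$ is trivial, and $\varphi$ is an isomorphism.

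The main delicate point is really the decomposition $a = xy$ of an arbitrary element: it relies essentially on power-associativity (so that integer exponents behave as expected and Bezout can be invoked), which in turn requires the central squares hypothesis via Lemma \ref{Lem:CS_PA}. Once that is in place, everything else reduces to standard internal-direct-product bookkeeping, made painless by the fact that $O$ is central and therefore nuclear, so all the associativity rearrangements needed to check the homomorphism property go through unobstructed.
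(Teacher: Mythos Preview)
Your proof is correct and follows essentially the same route as the paper: the key step in both is the B\'{e}zout decomposition $a = a^{vm}\cdot a^{u2^n}$ with $a^{vm}\in E$ and $a^{u2^n}\in O$, relying on power-associativity (Lemma \ref{Lem:CS_PA}). The only difference is that the paper concludes by invoking the standard fact that two normal subloops with trivial intersection whose product is the whole loop yield an internal direct product, whereas you verify the isomorphism $\varphi$ explicitly using $O\subseteq Z(Q)$; this makes your argument a bit more self-contained but is not a genuinely different approach.
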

\begin{proof}
Let $a\in Q$ with $a\neq e$ and let $n$ be the order of $a$. Write $n = 2^k m$ where $k\geq 0$ and $m$ is odd.
By B\'{e}zout's identity, there exist integers $i,j$ such that $i\cdot 2^k + j\cdot m = 1$. Set $b\coloneqq a^{j\cdot m}$
and $c\coloneqq a^{i\cdot 2^k}$; thus $bc = a$. Since $b^{2^k} = e$ and $c^m = e$, we have $b\in E$ and $c\in O$.
This proves $Q = EO$. Since each of $E$ and $O$ are normal subloops (Lemmas \ref{Lem:E} and \ref{Lem:O}) and $E\cap O = \{e\}$, we have the desired result.
\end{proof}

\section{Left C loops}
\label{Sec:leftC}

We conclude with a discussion of how the results of this paper specialize to left C loops. Recall that left C loops are defined by any of the equivalent identities mentioned in {\S}\ref{Sec:intro}. But there are useful characterizations \cite{DK}, \cite{Fe}, \cite{PVII}.

\begin{thm}
For a loop $Q$, the following are equivalent:
\begin{enumerate}
	\item $Q$ is a left C loop;
	\item $Q$ has left nuclear squares and the left alternative property;
	\item $Q$ has middle nuclear squares and the left alternative property;
	\item $Q$ has left nuclear squares and the left inverse property;
	\item $Q$ has middle nuclear squares and the left inverse property.
\end{enumerate}
\end{thm}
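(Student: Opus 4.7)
The plan is to route all equivalences through condition (1). I would prove (1) $\Leftrightarrow$ (2), (1) $\Leftrightarrow$ (3), (2) $\Leftrightarrow$ (4), and (5) $\Rightarrow$ (3) separately, and then obtain the remaining direction (3) $\Rightarrow$ (5) by chaining (3) $\Rightarrow$ (1) $\Rightarrow$ (2) $\Rightarrow$ (4) (which yields LIP) and combining with the MNS of (3).

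For (1) $\Leftrightarrow$ (2), I would take the defining left-C identity $x(x\cdot yz)=(xx\cdot y)z$: setting $y=e$ extracts LAlt in the form $x(xz)=x^2z$, after which the identity becomes $x^2(yz)=(x^2 y)z$, i.e., LNS. The converse is immediate by composing LAlt with LNS. An entirely analogous analysis of $x(y\cdot yz)=(x\cdot yy)z$ --- setting $x=e$ to extract LAlt and then reading off MNS --- handles (1) $\Leftrightarrow$ (3).

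For (2) $\Leftrightarrow$ (4) I would work at the level of left translations. Under LNS, Lemma \ref{Lem:sq_ids} supplies $L_{x^2}L_{x^{\ell}} = L_x$. Assuming LAlt as well, $L_{x^2}=L_x^2$, so $L_x^2 L_{x^{\ell}} = L_x$; cancelling $L_x$ yields $L_x L_{x^{\ell}} = \id{Q}$, and bijectivity of left translations forces $L_{x^{\ell}} = L_x\inv$, which is LIP. Conversely, LIP gives $L_{x^{\ell}} = L_x\inv$, and feeding this into $L_{x^2}L_{x^{\ell}} = L_x$ yields $L_{x^2} = L_x^2$, i.e., LAlt. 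For (5) $\Rightarrow$ (3), LIP forces two-sided inverses (as noted in {\S}\ref{Sec:basics}), and setting $y=x$ in LIP yields $x\inv\cdot x^2 = x$; then $x^2\in\mnuc{Q}$ gives $x\inv(x^2 y) = (x\inv x^2)y = xy = x\inv(x(xy))$, so cancelling $L_{x\inv}$ produces $x^2 y = x(xy)$, i.e., LAlt.

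The main obstacle I anticipate is the absence of a clean direct argument for (3) $\Rightarrow$ (5): Lemma \ref{Lem:sq_ids} crucially uses $x^2\in\lnuc{Q}$, and the natural middle-nuclear analog $(x^{\ell})^2 x = x^{\ell}$ arising from LAlt on $x^{\ell}$ sits in the wrong position for MNS to collapse into a single cancellable translation composition. Routing through LNS (which is in fact available to a LAlt + MNS loop via (3) $\Rightarrow$ (1) $\Rightarrow$ (2)) bypasses this asymmetry cleanly.
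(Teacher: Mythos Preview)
The paper does not actually prove this theorem; it is stated with attribution to \cite{DK}, \cite{Fe}, \cite{PVII} and no argument is given. So there is no ``paper's own proof'' to compare against.

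Your proof is correct. The implications (1) $\Leftrightarrow$ (2) and (1) $\Leftrightarrow$ (3) are exactly the standard Fenyves-style specialisations of two of the four equivalent left-C identities, and your use of Lemma~\ref{Lem:sq_ids} to handle (2) $\Leftrightarrow$ (4) is clean: under LNS alone one has $L_{x^2}L_{x^{\ell}} = L_x$, and then LAlt and LIP are visibly interchangeable via one cancellation. Your argument for (5) $\Rightarrow$ (3) is also fine, and your observation that a direct (3) $\Rightarrow$ (5) is awkward because Lemma~\ref{Lem:sq_ids} is a left-nuclear statement is accurate; routing through (1) and (2) is the natural workaround and costs nothing since those equivalences are already in hand.
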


\noindent In particular, since left C loops have the LIP, the left and middle nuclei coincide (Lemma \ref{Lem:lmrnuc}(4)).

Theorem \ref{Thm:LMNuc_normal} immediately specializes to this setting. 
A proof of the following result's second assertion was first published in \cite{DK}.

\begin{thm}\label{Thm:lnuc_normal}
Let $Q$ be a left C loop and let $N = \lnuc{Q}$. Then $L_{(N)}\lhd \mlt{Q}$ and $N\lhd Q$.
\end{thm}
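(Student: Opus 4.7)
The plan is that this theorem should fall out immediately as a specialization of Theorem \ref{Thm:LMNuc_normal}, so the ``proof'' is essentially a bookkeeping exercise linking the hypotheses of the two statements. Concretely, I would argue in two short steps.

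First, I would invoke the characterization theorem just stated: since $Q$ is a left C loop, parts (2) and (3) of that characterization tell us that $Q$ has both left nuclear squares and middle nuclear squares. Equivalently, parts (4) and (5) say $Q$ has the LIP together with both varieties of nuclear squares. Since $Q$ has the LIP, Lemma \ref{Lem:lmrnuc}(1) yields $\lnuc{Q} = \mnuc{Q}$. In particular, $N = \lnuc{Q} = \mnuc{Q} = \lmnuc{Q}$, so the hypothesis ``$N$ is the intersection of the left and middle nuclei'' in Theorem \ref{Thm:LMNuc_normal} is satisfied (trivially, since the two nuclei agree).

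Second, because $Q$ is a loop with left and middle nuclear squares, Theorem \ref{Thm:LMNuc_normal} applies directly: it gives both $\lsec{N} \lhd \mlt{Q}$ and $N \lhd Q$, which are precisely the two conclusions of Theorem \ref{Thm:lnuc_normal}.

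There is essentially no obstacle here; the substantive content was already proved in Theorem \ref{Thm:LMNuc_normal} via Lemmas \ref{Lem:sq_ids2}, \ref{Lem:xax}, and \ref{Lem:Lconj}. The only thing worth emphasizing in the write-up is the coincidence $\lnuc{Q} = \mnuc{Q}$ under LIP, since without that observation a reader might expect the conclusion to involve $\lmnuc{Q}$ rather than $\lnuc{Q}$. A single sentence citing Lemma \ref{Lem:lmrnuc}(1) handles this, and then a single sentence citing Theorem \ref{Thm:LMNuc_normal} finishes the proof.
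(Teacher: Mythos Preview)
Your proposal is correct and matches the paper's approach exactly: the paper simply asserts that ``Theorem \ref{Thm:LMNuc_normal} immediately specializes to this setting'' and gives no further argument. Your additional remark that LIP forces $\lnuc{Q}=\mnuc{Q}$ (so that $N=\lmnuc{Q}$) is a useful clarification the paper leaves implicit.
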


A \emph{Steiner loop} is a commutative loop satisfying the identity $x\cdot xy = y$. They can be characterized as unipotent C loops, that is, C loops of exponent $2$ ($x^2 = 1$ for all $x$).  Since squares in C loops are nuclear, 
it follows that the quotient of a C loop by its nucleus is a Steiner loop \cite{PV1}. Steiner loops are in one-to-one correspondence with Steiner triple systems, and hence, are important in combinatorics.

For the one-sided version of the preceding discussion, we will use the term \emph{left Steiner loop} to refer to unipotent, left C loops. A loop is left Steiner if and only if it satisfies the identity $x\cdot xy=y$
if and only if it is left alternative and has exponent $2$ if and only if it has the LIP and exponent $2$. The one-sided version of the relationship between a C loop and its quotient Steiner loop is the following.

\begin{prp}\label{Prp:leftSteiner}
The quotient of a left C loop by its left nucleus is a left Steiner loop.
\end{prp}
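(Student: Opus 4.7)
The plan is a short two-step argument relying on Theorem~\ref{Thm:lnuc_normal} together with the characterization of left Steiner loops given just before the proposition. First, since left C loops form a variety, the quotient of a left C loop by any normal subloop is again a left C loop. By Theorem~\ref{Thm:lnuc_normal}, $N = \lnuc{Q}$ is normal in $Q$, so the quotient $Q/N$ makes sense and is itself a left C loop.

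Second, I would show that $Q/N$ has exponent $2$. This is immediate from the fact (cited in the introduction as the first proposition of the paper) that every left C loop has left nuclear squares: for every $x\in Q$, we have $x^2 \in \lnuc{Q} = N$, hence $(xN)^2 = x^2 N = N$ in the quotient. Combining ``left C'' with ``exponent $2$'' gives, by the characterization stated just before the proposition, a left Steiner loop.

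There is no real obstacle here; the substantive work has already been done in establishing that $\lnuc{Q}$ is normal (Theorem~\ref{Thm:lnuc_normal}) and that squares in a left C loop are left nuclear (the first proposition of the paper). The proof is essentially a one-line corollary of these two facts, and I would write it in two or three sentences.
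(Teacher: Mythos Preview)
Your proposal is correct and follows essentially the same approach as the paper: the paper's proof observes that $x^2\in\lnuc{Q}$ for all $x\in Q$, so $Q/\lnuc{Q}$ is a unipotent left C loop, i.e., a left Steiner loop. Your write-up makes the varietal closure under quotients and the appeal to Theorem~\ref{Thm:lnuc_normal} explicit, but the argument is the same.
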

\begin{proof}
If $Q$ is a left C loop, then since $x^2\in \lnuc{Q}$ for all $x\in Q$, it follows that $Q/\lnuc{Q}$ is a unipotent, left C loop, that is, $Q/\lnuc{Q}$ is left Steiner.
\end{proof}

\emph{Right Steiner loops} are defined and characterized analogously. It is clear from the definitions that a loop is Steiner if and only if it is both left Steiner and right Steiner. Alternatively, this can be seen from a quick calculation: $xy = (xy \cdot x)x = (xy \cdot (xy \cdot y))x = yx$. Thus our suggested terminology is consistent with the general loop theory practice that for a property $\mathcal{P}$, left $\mathcal{P}$ and right $\mathcal{P}$ is equivalent to $\mathcal{P}$. (Bol loops are the obvious exception to this practice: left Bol and right Bol is equivalent to Moufang.)

\begin{cor}
	Every simple left C loop is a group or a left Steiner loop.
\end{cor}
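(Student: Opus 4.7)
The plan is to apply the normality result already established for left C loops, namely Theorem \ref{Thm:lnuc_normal}, which tells us that $\lnuc{Q}$ is a normal subloop of any left C loop $Q$. Since $Q$ is assumed to be simple, normality forces $\lnuc{Q} = \{e\}$ or $\lnuc{Q} = Q$, and I would split into those two cases.

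In the first case, $\lnuc{Q} = Q$ means every element is left nuclear, so $Q$ is associative, i.e., $Q$ is a group. In the second case, $\lnuc{Q} = \{e\}$; here I would use the fact that in any left C loop, squares lie in the left nucleus (the Proposition recalled in the introduction, or equivalently the (LNS) identity), so $x^2 = e$ for every $x \in Q$. Thus $Q$ has exponent $2$, and combined with the left C hypothesis this matches the characterization of left Steiner loops given just before the corollary (a loop is left Steiner if and only if it is left C of exponent $2$). So $Q$ is a left Steiner loop in this case.

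Alternatively, one could cite Corollary \ref{Cor:LMNS_simple} directly, since a left C loop has squares in both the left and middle nuclei; that corollary immediately yields that $Q$ is a group or a nonassociative loop of exponent $2$, and one then invokes the characterization of left Steiner loops to identify the latter case. There is essentially no obstacle here — the entire argument is a two-line deduction once the normality theorem and the exponent-$2$ characterization of left Steiner loops are in hand; the only small thing to be careful about is citing the correct characterization of left Steiner loops (exponent $2$ plus left C, not just exponent $2$).
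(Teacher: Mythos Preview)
Your proof is correct and follows essentially the same route as the paper: invoke the normality of $\lnuc{Q}$ in a left C loop, use simplicity to force $\lnuc{Q}=Q$ or $\lnuc{Q}=\{e\}$, and in the latter case conclude that $Q$ has exponent $2$ and is therefore left Steiner. The only cosmetic difference is that the paper phrases the second case via Proposition~\ref{Prp:leftSteiner} (i.e., $Q\cong Q/\lnuc{Q}$ is left Steiner) rather than directly verifying exponent~$2$ as you do.
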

\begin{proof}
If $Q$ is a simple left C loop, then either $L = \nuc{Q}$ or $\nuc{Q} = 1$. In the former case, $Q$ is a simple group. In the latter case, $Q$ is left Steiner by Proposition \ref{Prp:leftSteiner}.
\end{proof}

Finally, between general left C loops and left Steiner loops is the variety of left C loops with central squares. By Theorem \ref{Thm:AIP_SqEndo}, these can also be described as AIP left C loops or as left C loops with endomorphic squaring. In the torsion case, we immediately have the following consequence of Theorem \ref{Thm:ExO}. 

\begin{thm}
Let $Q$ be a torsion, AIP left C loop. Define $E$ and $O$ as in \eqref{Eqn:E} and \eqref{Eqn:O}, respectively. Then $Q\cong E\times O$.
\end{thm}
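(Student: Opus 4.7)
The plan is to reduce the statement directly to Theorem \ref{Thm:ExO}, which already handles torsion loops in which squaring is a centralizing endomorphism. So the only work is to verify that the hypotheses of \ref{Thm:ExO} are satisfied by a torsion AIP left C loop $Q$.

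First I would recall from the characterization theorem that a left C loop has both left nuclear and middle nuclear squares (indeed, this was noted earlier in the paper and is one of the listed equivalences). In particular, $Q$ has squares in two nuclei. Combined with the standing hypothesis that $Q$ has the AIP, Theorem \ref{Thm:AIP_SqEndo} applies and yields two conclusions: the squaring map $s\colon x\mapsto x^2$ is an endomorphism of $Q$, and the squares of $Q$ are central. Together these say exactly that $s$ is a centralizing endomorphism.

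With this in hand, $Q$ is a torsion loop whose squaring map is a centralizing endomorphism, so Theorem \ref{Thm:ExO} gives the direct product decomposition $Q\cong E\times O$ with $E$ and $O$ as defined in \eqref{Eqn:E} and \eqref{Eqn:O}.

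There is essentially no obstacle here; the theorem is a one-line corollary of Theorems \ref{Thm:AIP_SqEndo} and \ref{Thm:ExO} once the Bol–Moufang structure of left C loops (nuclear squares in two nuclei) is invoked. The only thing to double-check is that the notation $E$ and $O$ in \eqref{Eqn:E} and \eqref{Eqn:O} uses powers defined via $L_x^n(1)$, which is unambiguous because a left C loop is power-associative (either by Lemma \ref{Lem:CS_PA} applied after we have central squares, or more directly because left C loops have the LIP and left alternative property).
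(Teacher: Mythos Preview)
Your proposal is correct and matches the paper's approach exactly: the paper states this theorem as an immediate consequence of Theorem \ref{Thm:ExO}, and your argument spells out precisely the intended reduction via Theorem \ref{Thm:AIP_SqEndo} (left C loops have left and middle nuclear squares, so AIP forces squaring to be a centralizing endomorphism). There is nothing to add.
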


We conclude with an aside: the variety of AIP left C loops can be characterized by a single identity; we omit the easy proof.

\begin{prp}
The variety of AIP left C loops is axiomatized, in the variety of loops, by the identity $x \cdot (y \cdot yx)z = yx\cdot (yx \cdot z)$, that is, 
$L_x L_{y\cdot yx} = L_{yx}^2$.
\end{prp}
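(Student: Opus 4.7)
The plan is to handle both directions. For the forward direction, Theorem~\ref{Thm:AIP_SqEndo} gives central squares and endomorphic squaring in an AIP left C loop $Q$; one then chains
\[
L_x L_{y\cdot yx} = L_x L_{y^2 x} = L_x L_{y^2} L_x = L_{y^2}L_x^2 = L_{y^2 x^2} = L_{(yx)^2} = L_{yx}^2,
\]
using in turn \eqref{Eqn:LAlt}, $y^2\in\lnuc{Q}$, centrality of $y^2$ (which makes $L_{y^2}$ commute with $L_x$), \eqref{Eqn:LAlt} together with $y^2\in\lnuc{Q}$, endomorphic squaring, and \eqref{Eqn:LAlt} applied to $yx$.

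For the converse, I extract the needed properties of $Q$ from the identity by successive specialization. Setting $x=e$ collapses it to $L_{y^2}=L_y^2$, i.e., \eqref{Eqn:LAlt}. Setting $y=x\linv$ makes $yx=e$ and $y\cdot yx = x\linv$, so the identity becomes $L_x L_{x\linv} = \id{Q}$; evaluating at $e$ forces $x\cdot x\linv = e$, so $x\linv = x\rinv$, and $L_{x\linv} = L_x\inv$ is \eqref{Eqn:LIP}. Using LAlt, the identity reads $L_x L_{y^2 x} = L_{(yx)^2}$; evaluation at $z=e$ yields
\[
x \cdot y^2 x = (yx)^2. \qquad (\star)
\]
Given $u, x \in Q$, pick $y$ with $yx = u$; then $(\star)$ and LIP give $y^2 x = x\inv u^2$, so the identity becomes $L_{x\inv u^2} = L_{x\inv} L_{u^2}$ after cancelling $L_x$. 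As $x\inv$ ranges over $Q$, this says $u^2 \in \lnuc{Q}$, and Lemma~\ref{Lem:lmrnuc}(1) with LIP upgrades it to $u^2 \in \mnuc{Q}$; thus $Q$ is a left C loop.

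The main obstacle is upgrading to AIP, which by Lemma~\ref{Lem:CS_AIP_SqEnd} will follow once I show squares are central and squaring is an endomorphism. The plan is to feed $x\mapsto yx$ into $(\star)$ to get $(yx)\cdot y^2(yx) = (y^2 x)^2$. Rewriting the left side via $y^2\in\mnuc{Q}$ yields $((yx)\,y^2)(yx)$; rewriting the right side via \eqref{Eqn:LAlt} applied to $y^2 x$, $y^2\in\lnuc{Q}$, and $(\star)$ yields $(y^2(yx))(yx)$. Right-cancelling $(yx)$ leaves $(yx)\,y^2 = y^2(yx)$, and since $yx$ sweeps out $Q$ as $x$ varies, $y^2 \in C(Q)$. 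With central squares, $(\star)$ combined with \eqref{Eqn:MNS}, centrality, and \eqref{Eqn:LNS} gives $(yx)^2 = y^2 x^2$, so squaring is an endomorphism and AIP holds. The crux is aligning the nucleus-based rewrites so that a common factor $(yx)$ appears on the right of both sides and can be cancelled.
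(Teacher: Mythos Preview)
Your argument is correct in substance; the paper itself omits the proof entirely (``we omit the easy proof''), so there is nothing to compare against. One small slip: the relation you derive, $L_{x\inv u^2} = L_{x\inv} L_{u^2}$, is the characterization $L_v L_{u^2} = L_{vu^2}$ of $u^2\in\mnuc{Q}$, not of $u^2\in\lnuc{Q}$ as you write. Since you already have LIP and hence $\lnuc{Q}=\mnuc{Q}$ by Lemma~\ref{Lem:lmrnuc}(1), the conclusion is unaffected, but the labels should be swapped. Everything else---the extraction of LAlt and LIP by specialization, the derivation of $(\star)$, the substitution $x\mapsto yx$ and the cancellation yielding $y^2\in C(Q)$, and the final passage to endomorphic squaring and AIP via Lemma~\ref{Lem:CS_AIP_SqEnd}---is clean.
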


\section*{Acknowledgments}

This work was supported by the automated theorem prover \textsc{Prover9} and the finite model builder \textsc{Mace4}, both created by McCune \cite{McCune}.

\end{document}